\newtheorem{theorem}{Theorem}
\newtheorem{lemma}[theorem]{Lemma}
\newtheorem{remark}[theorem]{Remark}
\newenvironment{proof}[1][Proof]{\noindent\textbf{#1.} }{\ \rule{0.5em}{0.5em}}
\begin{document}

\title{Approximation of a multivariate conditional density.} 
\author{Virgile Caron\\ TelecomParisTech\\virgile.caron@telecom-paristech.fr}
\maketitle
\begin{abstract}
This paper extends the result of Broniatowski and Caron (2013) pertaining to the asymptotic distribution of a random walk conditioned on its final value as the number of summands increase. We consider multivariate light-tailed random walk and present a sharp approximation of long runs conditioned by an average of a function of its summands as their number tends to infinity.
\end{abstract}

\section{Introduction} \label{sec:intro_chap4}

Consider $\textbf{X}_{1}^{n}:=(\textbf{X}_{1},...,\textbf{X}_{n})$ a set of $n$ independent copies of a $d$-dimensional random vector $\mathbf{X}:=^{t}(\textbf{X}^{(1)},...,\textbf{X}^{(d)})$ with density $p_{\mathbf{X}}$ on $\mathbb{R}^{d}.$ The integer $d$ is assumed to be greater than one. The superscript $(j)$ pertains to the coordinate of a vector and the subscript $i$ pertains to replications. 

We consider the approximation of the density of the vector $\mathbf{X}_{1}^{k}$ on $\left(\mathbb{R}^{d}\right)^{k}$, when the conditioning event writes 

\begin{eqnarray}  \label{cond:surS}
\left(\mathbf{S}_{1,n}:=\mathbf{X}_{1}+...+\mathbf{X}_{n}=na_{n}\right)
\end{eqnarray}
with  $a_{n}:=^{t}(a_{n}^{(1)},...,a_{n}^{(d)})$ a convergent sequence and the integer value sequence $k:=k_{n}$ is such that 
\begin{align} \label{cond_dur_k}
\lim_{n \to \infty} \frac{k}{n}=1 \tag{K1} \\
\lim_{n \to \infty} (n-k)=+\infty \tag{K2}
\end{align}

Therefore we may consider the asymptotic behavior of the density of the random walk on long runs. We also consider a more general case when $\mathbf{S}_{1,n}$ is substituted by 
\begin{eqnarray}\label{cond:surU}
\mathbf{U}_{1,n}:=u(\mathbf{X}_{1})+...+u(\mathbf{X}_{n})
\end{eqnarray} 
for some measurable function $u$ defined from $\mathbb{R}^{d}$ to $\mathbb{R}^{s}$ and when the conditionning event writes $\left(\mathbf{U}_{1,n}:=u_{1,n}\right)$ where $u_{1,n}/n$ converges as $n$ tends to $\infty.$ The integer $s$ is assumed to be larger than one and lower than $n.$ The last condition on $s$ forbid to have a zero-one conditional event.

In \cite{BroniatowskiCaron2013}, the approximation of the density of the real vector $\mathbf{X}_{1}^{k}$ conditioned on $\left(\mathbf{U}_{1,n}:=u_{1,n}\right)$ is obtained either in probability under the conditional distribution of the random walk either in total variation norm between measures. We provide an extension of this results since we consider random vector on $\mathbb{R}^{d}$ and a function $u$ which is defined from $\mathbb{R}^{d}$ to $\mathbb{R}^{s}.$

In \cite{BroniatowskiCaron2013}, two mains advances over the literature were made. First, $k_{n}$ is allows to be on the same order than $n$ at the condition than $n-k_{n}$ tends to infinity when $n$ tends to infinity. Secondly, the conditioning event can be in the range of the central limit theorem, allowing to conditional inference (see \cite{BroniatowskiCaron2012Exp}) or in large deviation, allowing to estimate rare event probability (see \cite{BroniatowskiCaron2011IS}). 

The paper is organized as follows. Section \ref{sec:hyp_chap4} presents the main notations and general hypothesis. In Section \ref{sec:prop_princ_chap4}, we expose the approximation scheme for the conditional density of $\mathbf{X}_{1}^{k}$ when the conditioning event writes (\ref{cond:surS}). This approximation is extended to the case when the conditionning event writes $\left(\mathbf{U}_{1,n}=u_{1,n}\right)$ in Section \ref{sec:extension_chap4}. In Section \ref{sec:choi_k_chap4}, we discuss the value of $k$ for which the approximation described is valid. Some technicals lemmas are left to the Appendix while the main step of the proof are in the core of the paper.

\section{Notations and hypotheses} \label{sec:hyp_chap4}

\subsection{General Notations}

In this section, the conditionning event writes
\begin{eqnarray*}
\left(\mathbf{S}_{1,n}:=\mathbf{X}_{1} +...+\mathbf{X}_{n}=na_{n}\right).
\end{eqnarray*}

For sake of clarity, this event can be written as follows
\begin{eqnarray}\label{def:event_to_estime}
P_{n}=P\left[\bigcap_{j=1}^{d}\left\{\sum_{i=1}^{n}\mathbf{X}_{i}^{(1)},...,\mathbf{X}_{i}^{(d)}=na_{n}^{(j)}\right\}\right]
\end{eqnarray}
where, for $j\in\{1,...s\}$, $a_{n}^{(j)}\in\mathbb{R}.$

We assume that the characteristic function of $\mathbf{X}$ is in $L^{r}$ for some $r\geq{1}.$ This hypothesis is necessary to perform an Edgeworth expansion. We additionally assume that $\mathbf{X}$ satisfies the Cramer condition, i.e. $\mathbf{X}$ has a finite moment generating function in a non void neighborhood of $\underline{0}$
\begin{eqnarray} \label{def:fgm_r_d}
\Phi(t):=E[\exp\left(<t,\textbf{X}>\right)]<\infty, \textsc{\ \ \ } t\in V(\underline{0})\subset\mathbb{R}^{d}
\end{eqnarray} 
where $V(\underline{0})$ is a neighborhood of $\underline{0}$, which is the vector of $\mathbb{R}^{d}$ with all coordinates equal to zero and $<.,.>$ is the standard inner product in $\mathbb{R}^{d}.$ In other words, we consider only light tailed random vector.

Denote  
\begin{eqnarray}
m(t):= \nabla \log(\Phi(t)), \textsc{\ \ \ } t\in V(\underline{0})\subset\mathbb{R}^{d}
\end{eqnarray}
and
\begin{eqnarray} \label{def:cov_matrix}
\kappa(t):= {}^{t}\nabla \nabla m(t), \textsc{\ \ \ } t \in V(\underline{0})\subset\mathbb{R}^{d}.
\end{eqnarray}

The value of $m(t_{\alpha}):= {}^{t}\nabla \log\Phi(t_{\alpha})$  and
$\kappa(t_{\alpha}):= {}^{t}\nabla \nabla \log\Phi(t_{\alpha})$ are respectively the expected value and the covariance matrix of the \textit{tilted} density defined by
\begin{eqnarray} \label{def:pi_R_d_chap5}
\pi^{\alpha}(x) :=\frac{\exp <t,x>}{\Phi(t)} p(x)
\end{eqnarray}
where $t$ is the only solution of $m(t)=\alpha$ when $\alpha$ belong to the support of $\mathbf{X}.$ Conditions on $\Phi(t)$ which ensure existence and uniqueness of $t$ are referred to \textit{steepness properties}; we refer to \cite{BarndorffNielsen1978_chap_multi}, p.153 and followings for all properties of moment generating functions used in this paper. Denote $\Pi^{\alpha}$ the probability measure with density $\pi^{\alpha}.$

The conditional density of $\mathbf{X}_{1}^{k}$ on $\left(\mathbb{R}^{d}\right)^{k}$ conditioned by $\left(\mathbf{S}_{1,n}=na_{n}\right)$ will be denoted $p_{na_{n}}.$ For a generic random vector $\mathbf{Z}$ with density $p$, $p\left( \mathbf{Z}=z\right)$ denotes the value of $p$ at point $z.$ Therefore,

\begin{eqnarray}
p_{na_{n}}(Y_{1}^{k}):=p(\textbf{X}_{1}^{k}=Y_{1}^{k}|\mathbf{S}_{1,n}=na_{n})
\end{eqnarray}

The $d$-dimensional normal density with expected value $\mu$ and covariance matrix $\kappa$ at $x$ is denoted $\mathfrak{n}_{d}(x;\mu,\kappa).$ When $\mu=0$ and $\kappa=I_{d}$, we denote $\mathfrak{n}_{d}(x):=\mathfrak{n}_{d}(x;0,I_{d}).$

\subsection{Specific notations}

We introduce some very specific notations in this section. Most of the notations are presented by \cite{BardnoffCox1990} (see p.130 and followings) or in \cite{McCullagh1984}. This notations are standard when dealing with multidimensional Edgeworth expansion, as proved by the great deal of paper using them (see \cite{PaceSalvan1992} on conditional cumulants or \cite{PaceSalvan2004} on likelihood expansions). 

The subscript $j,k,l,...$ with or without suffix are integers between $1$ and $d.$ Furthermore, we adopt the Einstein convention, i.e. if a subscript is up and down in a same expression, then the summation is on the entire space of possible value for this subscript. For example, $a^{j}b_{j}=\sum_{j=1}^{d}a^{j}b_{j}$ or $a^{jlm}b_{jr}=\sum_{j=1}^{d}a^{jlm}b_{jr}.$
 
\subsubsection{Moments and cumulants} \label{sec:moments}

Denote $\kappa^{j,l}$ the generic term of the covariance matrix defined in (\ref{def:cov_matrix}) and $\kappa_{j,l}$ the generic term of the inverse matrix. More generally, the joint moment and the joint cumulant of $(X^{(i_{1})}...X^{(i_{\nu})})$ are denoted respectively

\begin{eqnarray}
\kappa^{i_{1}...i_{\nu}}=E[X^{(i_{1})}...X^{(i_{\nu})}] \\
\kappa^{i_{1},...,i_{\nu}}=K[X^{(i_{1})}...X^{(i_{\nu})}]
\end{eqnarray}

The symbol $[n]$ will indicate a sum of $n$ terms determined by a permutation of the subscripts. For example,
\begin{eqnarray*}
\kappa^{j,l}x_{m}[3]=\kappa^{j,l}x_{m}+\kappa^{j,m}x_{l}+ \kappa^{m,l}x_{j}.
\end{eqnarray*}
Finally, using the summation convention, denote
\begin{eqnarray}
\kappa_{i_{1},...,i_{\nu}}=
\kappa_{i_{1},j_{1}}...\kappa_{i_{\nu},j_{\nu}}\kappa^{j_{1},...,j_{\nu}}
\end{eqnarray}
where $\kappa_{j,l}$ is the generic term of the inverse matrix.

To simplify some notation, we also use the \textit{index} notation (see p.132 of \cite{BardnoffCox1990}). For a set $I=\{i_{1},...,i_{\nu}\}$, we sometimes rewrite the joint moment or the joint cumulant as  
\begin{eqnarray}
\kappa_{1}^{I}:=\kappa_{1}^{i_{1}...i_{\nu}}=E[X^{(i_{1})}...X^{(i_{\nu})}]\\
\kappa_{0}^{I}:=\kappa_{0}^{i_{1},...,i_{\nu}}=K[X^{(i_{1})}...X^{(i_{\nu})}]
\end{eqnarray}

\subsubsection{Tensor Hermite polynomials} \label{sec:herm}

In the main proof, we use an Edgeworth expansion with the associated Hermite polynomials. In $\mathbb{R}^{d}$ the formula giving the tensorial Hermite polynomial $h_{i_{1}...i_{k}}$ (p.150) associated with the $d$-dimensional standard normal density can be obtain using

\begin{eqnarray} \label{def:Hrd}
\mathfrak{n}_{d}(x)h_{i_{1}...i_{k}}(x)=
(-1)^{k}\partial_{i_{1}}...\partial_{i_{1}}\mathfrak{n}_{d}(x)
\end{eqnarray}

where $\partial_{i_{1}}=\frac{\partial}{\partial x^{(i_{1})}}$ and $x^{(i_{1})}$ is the $i_{1}$th coordinate of $x.$

Denote $x_{j}=\kappa_{j,l}x^{(l)}.$ Then the polynomial $h$ used in the multidimensional Edgeworth expansion are

\begin{eqnarray} 
h_{jlm}(x):=x_{j}x_{l}x_{m}-\kappa_{j,l}x_{m}[3]
\end{eqnarray}
\begin{eqnarray}
h_{jlmq}(x):=x_{j}x_{l}x_{m}x_{q}-\kappa_{j,l}x_{m}x_{q}[6]+\kappa_{j,l}\kappa_{m,q}[3]
\end{eqnarray}
\begin{eqnarray}
h_{jlmqrs}(x):=x_{j}x_{l}x_{m}x_{q}x_{r}x_{s}-\kappa_{j,l}x_{m}x_{q}x_{r}x_{s}[15] \notag \\ +\kappa_{j,l}\kappa_{m,q}x_{r}x_{s}[45]-\kappa_{j,l}\kappa_{m,q}\kappa_{r,s}[15]
\end{eqnarray}

\subsubsection{Egeworth expansion} \label{sec:edg}

As we know, the Edgeworth expansion in $\mathbb{R}^{d}$ for the sum of i.i.d. random vectors is based on two ingredrients, first the Hermite polynomial defined in (\ref{def:Hrd}) and the Taylor expansion of the moment generating function. Using the Einstein convention summation, it holds
\begin{eqnarray} \label{def:edgrd}
p_{\frac{\overline{\mathbf{S}_{1,n}}}{\sqrt{n}}}(x):=\mathfrak{n}_{d}(x)(1+\frac{\kappa^{j,l,m}}{6\sqrt{n}}h_{jlm}(x)
+\frac{\kappa^{j,l,m,q}}{24n}h_{jlmq}(x)+
\frac{\kappa^{j,l,m}\kappa^{q,r,s}}{72n}h_{jlmqrs}(x))
\end{eqnarray}
\begin{center}
$+O(\frac{1}{n^{3/2}})$
\end{center}
where $\overline{\mathbf{S}_{1,n}}=\Sigma^{-1/2}\left(\mathbf{S}_{1,n}-n\mu\right).$

Define
\begin{eqnarray} \label{def:Q3}
Q_{3}(x):=\frac{\kappa^{j,l,m}}{6}h_{jlm}(x)
\end{eqnarray}
and
\begin{eqnarray} \label{def:Q4}
Q_{4}(x):=\frac{\kappa^{j,l,m,q}}{24}h_{jlmq}(x)+\frac{\kappa^{j,l,m}\kappa^{q,r,s}}{72}h_{jlmqrs}(x).
\end{eqnarray}

Then, (\ref{def:edgrd}) can be written
\begin{eqnarray} \label{def:edgrd2}
p_{\frac{\overline{\mathbf{S}_{1}^{n}}}{\sqrt{n}}}(x):=\mathfrak{n}_{d}(x)(1+\frac{1}{\sqrt{n}}Q_{3}(x)
+\frac{1}{n}Q_{4}(x))+O(\frac{1}{n^{3/2}}).
\end{eqnarray}

\section{Multivariate random walk conditioned on their sum.} \label{sec:prop_princ_chap4}

Let $\epsilon_{n}$ be a positive sequence such as 
\begin{equation} \label{cond:A1}
\lim_{n \to \infty} \epsilon_{n}^{2}(n-k)=\infty \tag{E1}
\end{equation}
\begin{equation} \label{cond:A2}
\lim_{n \to \infty} \epsilon_{n}(\log n)^{2}=0 \tag{E2}
\end{equation}

It will be shown that $\epsilon_{n}\left( \log n\right) ^{2}$ is the rate of
accuracy of the approximating scheme.

Denote $a:=a_{n}$ the generic term of the convergent sequence in $\mathbb{R}^{d}$ of $\left(a_{n}\right)_{n\geq 1}.$

Define the approximating density $g_{na}(y_{1}^{k})$ on $\left(\mathbb{R}^{d}\right)^{k}$ as follows.
Denote
\begin{eqnarray}
g_{0}(y_{1}|y_{0}):=\pi^{a}(y_{1})
\end{eqnarray}
where $y_{0}$ is arbitrary and $\pi^{a}$ is defined by (\ref{def:pi_R_d_chap5}).
For $1\leq{i\leq{k-1}}$, define  $g(y_{i+1}|y_{1}^{i})$ recursively.
Let $t_{i} \in \mathbb{R}^{d}$ be the unique solution of the equation
\begin{eqnarray} \label{def:m_i_r_d}
m_{i}:=m(t_{i})=\frac{n}{n-1}\left(a-\frac{s_{1,i}}{n}\right)
\end{eqnarray}
where $s_{1,i}:=y_{1}+...+y_{i}.$

Define
\begin{eqnarray}
\kappa_{(i,n)}^{j,l}:=\frac{d^{2}}{dt^{(j)}dt^{(l)}}\left(\log E_{\pi^{m_{i}}}\exp <t,\mathbf{X}>\right)\left(0\right) 
\end{eqnarray}
and
\begin{eqnarray}
\kappa_{(i,n)}^{j,l,m}:=\frac{d^{3}}{dt^{(j)}dt^{(l)}dt^{(m)}}\left(\log E_{\pi^{m_{i}}}\exp <t,\mathbf{X}>\right)\left(0\right) .
\end{eqnarray}

Let
\begin{eqnarray} \label{def:g_y_i+1_Rd_chap4}
g(y_{i+1}|y_{1}^{i}):=C_{i}\mathfrak{n}_{d}(y_{i+1};\beta\alpha+a,\beta)p(y_{i+1})
\end{eqnarray}
where
\begin{eqnarray} \label{alpha_r_d}
\alpha:=\left(t_{i}+\frac{\kappa_{(i,n)}^{-2}\gamma}{2(n-i-1)}\right)
\end{eqnarray}
\begin{eqnarray} \label{beta_r_d}
\beta:=\kappa_{(i,n)}(n-i-1)
\end{eqnarray}
and $\gamma$ is defined by
\begin{eqnarray}
\gamma:=(\sum_{j=1}^{d} \kappa_{(i,n)}^{j,j,p})_{1\leq{p}\leq{d}}.
\end{eqnarray}
$C_{i}$ is the normalizing constant allowing $g(y_{i+1}|y_{1}^{i})$ to be a density.

Finally, define
\begin{eqnarray} \label{def:g_a_Rd_chap4}
g_{na}(y_{1}^{k}):=g_{0}(y_{1}|y_{0})\prod_{i=1}^{k-1}g(y_{i+1}|y_{1}^{i}).
\end{eqnarray}
We now can state the main theorem.
\begin{theorem} \label{th:princ}
Assume (\ref{cond:A1}) et (\ref{cond:A2}).
\begin{enumerate}
\item Let $Y_{1}^{k}$ be a sample with density $p_{na}.$ Then
\begin{eqnarray} \label{th:cond_ponc_Rd_chap1}
p(\textbf{X}_{1}^{k}=Y_{1}^{k}|\mathbf{S}_{1,n}=na)=g_{na}(Y_{1}^{k})
(1+o_{P_{na}}(1+\epsilon_{n}(\log n)^{2}))
\end{eqnarray}
\item Let $Y_{1}^{k}$ be a sample of density $g_{na}.$ Then
\begin{eqnarray} \label{th:cond_ponc_Rd_chap1_inverse}
p(\textbf{X}_{1}^{k}=Y_{1}^{k}|\mathbf{S}_{1,n}=na)=g_{na}(Y_{1}^{k})
(1+o_{G_{na}}(1+\epsilon_{n}(\log n)^{2}))
\end{eqnarray}
\end{enumerate}
\end{theorem}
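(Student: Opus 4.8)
The plan is to exploit the independence of the summands to write the exact conditional density as a telescoping product of one-step conditional densities, and then to approximate each factor by $g(y_{i+1}\mid y_1^i)$ through an exponential tilting combined with the Edgeworth expansion (\ref{def:edgrd2}). First I would write
\[
p(\mathbf{X}_1^k = Y_1^k \mid \mathbf{S}_{1,n}=na) = g_0(y_1\mid y_0)\prod_{i=1}^{k-1} p(\mathbf{X}_{i+1}=y_{i+1}\mid \mathbf{X}_1^i=y_1^i,\ \mathbf{S}_{1,n}=na),
\]
and note that, since the $\mathbf{X}_j$ are i.i.d., conditioning on $\mathbf{S}_{1,n}=na$ given the past $y_1^i$ amounts to conditioning the remaining block to sum to $na-s_{1,i}$. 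Each factor is therefore the ratio
\[
p(\mathbf{X}_{i+1}=y_{i+1}\mid \mathbf{X}_1^i=y_1^i,\ \mathbf{S}_{1,n}=na) = \frac{p(y_{i+1})\, p_{\mathbf{S}_{i+2,n}}(na-s_{1,i}-y_{i+1})}{p_{\mathbf{S}_{i+1,n}}(na-s_{1,i})},
\]
a quotient of the density of a sum of $n-i-1$ independent copies of $\mathbf{X}$ over that of $n-i$ copies; the denominator does not depend on $y_{i+1}$ and will be absorbed into $C_i$.

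Second, I would tilt by $t_i$ defined in (\ref{def:m_i_r_d}), so that under $\pi^{m_i}$ the recentred target sits near the mean of the remaining block and the two sum-densities fall in the regime where a local Edgeworth expansion is valid. Expanding numerator and denominator through (\ref{def:edgrd2}) with the tilted cumulants $\kappa_{(i,n)}^{j,l}$ and $\kappa_{(i,n)}^{j,l,m}$, the leading Gaussian factors combine to yield the normal density $\mathfrak{n}_{d}(y_{i+1};\beta\alpha+a,\beta)$ with $\beta$ as in (\ref{beta_r_d}), while the ratio of the first ($Q_3$) Edgeworth corrections produces precisely the third-cumulant drift adjustment $\kappa_{(i,n)}^{-2}\gamma/(2(n-i-1))$ appearing in $\alpha$ of (\ref{alpha_r_d}). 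The surviving factor $p(y_{i+1})$ matches the one in (\ref{def:g_y_i+1_Rd_chap4}), and after fixing the normalising constant $C_i$ the whole factor equals $g(y_{i+1}\mid y_1^i)$ up to a multiplicative error of order $(n-i)^{-3/2}$ times a polynomial in the recentred argument, valid as long as $na-s_{1,i}$ stays within $O(\epsilon_n\sqrt{n-i})$ of $(n-i)m_i$.

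Third, I would accumulate the per-step errors over $i=1,\dots,k-1$ by summing their logarithms. The hypotheses (E1) and (E2) are exactly what is needed here: (E1) guarantees that even the last factors, where only $n-k\to\infty$ summands remain, still admit a valid Edgeworth expansion, and (E2) ensures that the accumulated correction does not exceed the claimed rate $\epsilon_n(\log n)^2$. The delicate point, and the main obstacle, is uniformity: the tilt $t_i$, the matrices $\kappa_{(i,n)}$, the vector $\gamma$ and the Hermite polynomials all depend on the random partial sum $s_{1,i}$, so the per-step bounds must hold simultaneously over a set of trajectories of probability tending to one. This reduces to showing that $s_{1,i}$ remains, with high probability, in a neighbourhood of the bridge $ia$ of width controlled by $\epsilon_n$, so that $na-s_{1,i}\approx (n-i)a$ and every Edgeworth argument stays $O(1)$; the smoothness of $t\mapsto m(t)$ and of the cumulant functions then makes $t_i,\ \kappa_{(i,n)},\ \gamma$ vary along the trajectory in a Lipschitz-controlled way.

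Finally, I would prove the two probabilistic statements separately. For part (1), under $P_{na}$ the conditioned walk behaves like a random bridge, and a deviation estimate (or a local limit theorem for the conditioned increments) confines $s_{1,i}$ to the required neighbourhood with $P_{na}$-probability tending to one, which gives the $o_{P_{na}}$ conclusion. For part (2) the same confinement must be checked under $G_{na}$; here the explicit recursive construction of $g_{na}$ is what makes the argument work, since under $G_{na}$ each increment is drawn from a tilted-plus-Gaussian law whose mean tracks $m_i$, so the $G_{na}$-trajectory also concentrates around the bridge and the identical per-step bounds apply, yielding the $o_{G_{na}}$ conclusion. The supporting lemmas on the validity and uniformity of the Edgeworth expansion and on the concentration of the partial sums are the ones I would defer to the Appendix.
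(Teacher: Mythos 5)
Your treatment of part (1) is essentially the paper's own proof: the same Bayes telescoping into one-step conditional densities, the same tilting by $t_i$ solving (\ref{def:m_i_r_d}), the same Edgeworth expansion of the ratio of the two sum-densities (with the $Q_3$ correction producing the drift $\kappa_{(i,n)}^{-2}\gamma/(2(n-i-1))$ appearing in $\alpha$), and the same accumulation of per-step errors controlled by keeping the trajectory near the bridge --- which the paper formalizes through Lemma \ref{lem:min} (the $m_i$ stay within $o_{P_{na}}(\epsilon_n)$ of $a$), Lemma \ref{lem:max} (the increments are $O_{P_{na}}(\log n)$), and a law of large numbers for exchangeable triangular arrays (Theorem \ref{Theo:Taylor}) used to sum the random per-step terms.

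Where you genuinely depart from the paper is part (2), and there your route is both heavier and incomplete. You propose to re-run the whole confinement analysis under $G_{na}$, asserting that the $G_{na}$-trajectory ``also concentrates around the bridge'' because each increment's mean tracks $m_i$. That assertion is the entire difficulty: under $G_{na}$ the increments are no longer i.i.d.\ (each factor $g(y_{i+1}|y_1^i)$ depends on the past), so the maximal inequalities of Lemmas \ref{lem:min} and \ref{lem:max} do not transfer, and you would need to prove fresh versions of them for this dependent chain --- nothing in your sketch does so. The paper avoids the issue entirely with Lemma \ref{Lemma:commute_from_p_n_to_g_n}, a purely measure-theoretic transfer statement: if $\mathfrak{r}_{n}\left( Y_{1}^{n}\right) =\mathfrak{s}_{n}\left(Y_{1}^{n}\right) \left( 1+o_{\mathfrak{R}_{n}}(\varepsilon_{n})\right)$ then automatically $\mathfrak{s}_{n}\left( Y_{1}^{n}\right) =\mathfrak{r}_{n}\left(Y_{1}^{n}\right) \left( 1+o_{\mathfrak{S}_{n}}(\varepsilon_{n})\right)$, because the set on which the two densities are within a factor $(1\pm\delta\varepsilon_n)$ has $\mathfrak{R}_n$-probability tending to one, and the comparability of the densities on that very set forces its $\mathfrak{S}_n$-probability to tend to one as well. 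Hence part (2) is a one-line corollary of part (1), requiring no structural knowledge of $G_{na}$ whatsoever. If you want your version of part (2) to stand, you must either supply the missing concentration argument for the $G_{na}$-chain (a genuine additional piece of work) or adopt the paper's transfer lemma.
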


\begin{proof}
The proof of this theorem use the same argument of Theorem 2 of \cite{BroniatowskiCaron2013}. The proof uses Bayes formula to write $p(\left.\mathbf{X}_{1}^{k}=Y_{1}^{k}\right\vert \mathbf{S}_{1,n}=na)$ as a product of $k$ conditional densities of individual terms of the trajectory evaluated at $Y_{1}^{k}$. Each term of this product is approximated through an Edgeworth expansion which together with the properties of $Y_{1}^{k}$ under $P_{na}$ (see Lemma \ref{lem:min} and \ref{lem:max}) concludes the proof. By analogy with the real case, this two lemmas allow us to control the terms from the Edgeworth expansion. This lemmas are pertaining to the coordinates of the $m_{i}$ and the $Y_{i+1}.$ This proof is rather long and we have differed its technical steps to the Appendix.

Denote $S_{1,0}:=0,$ $S_{1,1}=Y_{1}$ et $S_{1,i}=S_{1,i-1}+Y_{i}.$ Then

\[p(\textbf{X}_{1}^{k}=Y_{1}^{k}|\mathbf{S}_{1,n}=na )=  \]

\[p(\textbf{X}_{1}=Y_{1}|\mathbf{S}_{1,n}=na ) \prod_{i=1}^{k-1} p(\textbf{X}_{i+1} =Y_{i+1}|\textbf{X}_{1}^{i}=Y_{1}^{i},\mathbf{S}_{1,n}=na)\]

\[ = \prod_{i=0}^{k-1}p( \textbf{X}_{i+1}=Y_{i+1}|\mathbf{S}_{i+1,n}=na -S_{1,i}).\] 

We make use of the following property which states the invariance of
conditional densities under the tilting: For $1\leq i\leq j\leq n,$ for all $%
a$ in the range of $\mathbf{X},$ for all $u$ and $s$ 
\begin{equation}
p\left( \left. \mathbf{S}_{i,j}=u\right\vert \mathbf{S}_{1,n}=s\right) =\pi
^{a}\left( \left. \mathbf{S}_{i,j}=u\right\vert \mathbf{S}_{1,n}=s\right)
\end{equation}%
where $\mathbf{S}_{i,j}:=\mathbf{X}_{i}+...+\mathbf{X}_{j}$ together with $%
\mathbf{S}_{1,0}=s_{1,0}=0$.

Applying this formula, it holds

\[p(\textbf{X}_{i+1}=Y_{i+1}|\mathbf{S}_{i+1,n}=na-S_{1,i})\]

\[=\pi^{m_{i}}(\textbf{X}_{i+1}=Y_{i+1}|\mathbf{S}_{i+1,n}=na-S_{1,i})\]

\[=\pi^{m_{i}}(\textbf{X}_{i+1}=Y_{i+1})\frac{\pi
^{m_{i}}(\mathbf{S}_{i+2,n}=na -S_{1,i+1}) }{\pi ^{m_{i}}(\mathbf{S}_{i+1,n}=na -S_{1,i})}\]

where we used the independence of the $\textbf{X}_{j}$ under $\pi ^{m_{i}}.$ A precise evaluation of each dominating terms in the previous fraction is needed.
Under the sequence of densities $\pi ^{m_{i}}$, the i.i.d. random vectors $\textbf{X}_{i+1},...,\textbf{X}_{n}$ define a triangular array which satisfy a central limit theorem and an Edgeworth expansion. Under $\pi ^{m_{i}}$, $\textbf{X}_{i+1}$ has expectation $m_{i}$ and covariance matrix $\kappa_{(i,n)}.$

Center and normalize both the numerator and denominator in the fraction which
appears in the last display and an Edgeworth expansion to the order
5 is performed for the numerator and the denominator. The main arguments
used in order to obtain the order of magnitude of the involved quantities
are 
\begin{enumerate}
\item  a maximal inequality which controls the magnitude of $m_{i}^{j}$ for
all $i$ between $0$ and $k-1$ and $j$ between $1$ and $d$ stated in Lemma \ref{lem:max}.
\item the order of the maximum of the $Y_{i}^{\prime }s$  stated in Lemma \ref{lem:min}.
\end{enumerate}
As proved in the appendix, it holds under (\ref{cond:A1}) and (\ref{cond:A2}),

\begin{eqnarray}
p(\textbf{X}_{i+1}=Y_{i+1}|\mathbf{S}_{i+1,n}=na-S_{1,i})=\frac{\sqrt{n-i}}{\sqrt{n-i-1}} \pi^{m_{i}}\left(\textbf{X}_{i+1}=Y_{i+1}\right)\frac{N_{i}}{D_{i}}
\end{eqnarray}
where $N_{i}$ and $D_{i}$ are defined by
\begin{eqnarray} \label{def:Ni}
N_{i}:=\exp\{-\frac{{}^t\left(Y_{i+1}-a\right)
\kappa_{(i,n)}^{-1}\left(Y_{i+1}-a\right)}{2(n-i-1)}\}A_{i}+O_{P_{na}}(\frac{1}{(n-i-1)^{3/2}})
\end{eqnarray}
with
\begin{eqnarray} \label{def:Ai}
A_{i}:=1+\frac{{}^{t}\left(Y_{i+1}-a\right)\kappa_{(i,n)}^{-2}\gamma}{2(n-i-1)} +\frac{\delta^{(i,n)}}{n-i-1}+\frac{o_{P_{na}}(\epsilon_{n}(\log n))}{n-i-1}
\end{eqnarray}

and

\begin{eqnarray} \label{def:Di}
D_{i}:=1+\frac{\delta^{(i,n)}}{n-i}+O_{P_{na}}(\frac{1}{(n-i)^{3/2}})
\end{eqnarray}
where the expression of $\delta^{(i,n)}$,defined in (\ref{def:f_total}), depends of the cumulants.

The term $O_{P_{na}}(\frac{1}{(n-i-1)^{3/2}})$ in (\ref{def:Ni}) is uniform in $Y_{i+1}.$ 

The terms in the expression of the approximating density come from an expansion in both ratio. The Gaussian compound is explicit in (\ref{def:Ni}) and the term  $\frac{{}^{t}Y_{i+1}\kappa_{(i,n)}^{-2}\gamma}{2(n-i-1)}$ is the dominating term of $A_{i}.$ The normalizing factor $C_{i}$ in $g(Y_{i+1}|Y_{1}^{i})$ compensate the term $\frac{\sqrt{n-i}}{\Phi(t_{i})\sqrt{n-i-1}}\exp\left( \frac{{}^{t}a\kappa_{(i,n)}^{-2}\gamma}{2(n-i-1)}\right)$ where $\Phi(t_{i})$ come from the term $\pi^{m_{i}}\left(\textbf{X}_{i+1}=Y_{i+1}\right).$ The product of the rest of the terms allow to obtain the convergence rate $1+o_{P_{na}}(1+\epsilon_{n}(\log n)^{2}).$ The technicals details are left in the Appendix. (\ref{th:cond_ponc_Rd_chap1}) has been proved.
\end{proof}

Applications of Theorem \ref{th:princ} in Importance
Sampling procedures and in Statistics require (\ref{th:cond_ponc_Rd_chap1_inverse}). So assume
that $Y_{1}^{k}$ is a random vector generated under $G_{na}$ with density $g_{na}.$ Can we state that $g_{na}\left( Y_{1}^{k}\right) $ is a good
approximation for $p_{na}\left( Y_{1}^{k}\right) $? This holds true. We
state a simple Lemma in this direction.

Let $\mathfrak{R}_{n}$ and $\mathfrak{S}_{n}$ denote two p.m.'s on $\mathbb{R}%
^{n}$ with respective densities $\mathfrak{r}_{n}$ and $\mathfrak{s}_{n}.$

\begin{lemma}
\label{Lemma:commute_from_p_n_to_g_n} Suppose that for some sequence $%
\varepsilon_{n}$ which tends to $0$ as $n$ tends to infinity%
\begin{equation}
\mathfrak{r}_{n}\left( Y_{1}^{n}\right) =\mathfrak{s}_{n}\left(
Y_{1}^{n}\right) \left( 1+o_{\mathfrak{R}_{n}}(\varepsilon_{n})\right)
\label{p_n equiv g_n under p_n}
\end{equation}
as $n$ tends to $\infty.$ Then 
\begin{equation}
\mathfrak{s}_{n}\left( Y_{1}^{n}\right) =\mathfrak{r}_{n}\left(
Y_{1}^{n}\right) \left( 1+o_{\mathfrak{S}_{n}}(\varepsilon_{n})\right) .
\label{g_n equiv p_n under g_n}
\end{equation}
\end{lemma}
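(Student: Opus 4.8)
The plan is to work with the likelihood ratio $\rho_n := \mathfrak{r}_n(Y_1^n)/\mathfrak{s}_n(Y_1^n)$, so that the hypothesis (\ref{p_n equiv g_n under p_n}) reads $\rho_n - 1 = o_{\mathfrak{R}_n}(\varepsilon_n)$ and the desired conclusion (\ref{g_n equiv p_n under g_n}) reads $\rho_n^{-1} - 1 = o_{\mathfrak{S}_n}(\varepsilon_n)$. Since $\varepsilon_n \to 0$, I would first extract from the hypothesis the crude statement that $\rho_n \to 1$ in $\mathfrak{R}_n$-probability: for fixed $\delta>0$ one has $\mathfrak{R}_n(|\rho_n-1|>\delta)\le \mathfrak{R}_n(|\rho_n-1|/\varepsilon_n > c)$ as soon as $\delta/\varepsilon_n \ge c$, and the right-hand side tends to $0$ for every fixed $c$. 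I would then fix $\delta>0$ and aim to show $\mathfrak{S}_n(A_n)\to 0$, where $A_n := \{\,|\rho_n^{-1}-1|>\delta\varepsilon_n\,\}$.

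First I would reduce the target event to one expressed through $\rho_n-1$. On $\{\rho_n\ge 1/2\}$ the elementary bound $|\rho_n^{-1}-1| = |\rho_n-1|/\rho_n \le 2|\rho_n-1|$ holds, so $A_n\cap\{\rho_n\ge 1/2\}\subseteq C_n := \{\,|\rho_n-1|>(\delta/2)\varepsilon_n\,\}$, while trivially $A_n\cap\{\rho_n<1/2\}\subseteq W_n := \{\rho_n<1/2\}$. Since $\varepsilon_n\to 0$ we have $W_n\subseteq\{|\rho_n-1|>1/2\}\subseteq C_n$ for $n$ large, whence $A_n\subseteq C_n$ eventually and it suffices to prove $\mathfrak{S}_n(C_n)\to 0$. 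Splitting $C_n$ along $\{\rho_n\ge1/2\}$ and $W_n$, on the first piece $\mathfrak{s}_n=\mathfrak{r}_n/\rho_n\le 2\mathfrak{r}_n$ gives $\int_{C_n\cap\{\rho_n\ge1/2\}}\mathfrak{s}_n\le 2\mathfrak{R}_n(C_n)\to 0$ by the hypothesis applied with constant $\delta/2$.

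The remaining, and genuinely delicate, contribution is $\mathfrak{S}_n(W_n)$: this is exactly the region where $\mathfrak{s}_n$ dominates $\mathfrak{r}_n$, which is invisible to $\mathfrak{R}_n$ and hence cannot be controlled by transferring $\mathfrak{R}_n$-smallness. Here I would exploit that $\mathfrak{r}_n$ and $\mathfrak{s}_n$ are both probability densities, $\int\mathfrak{r}_n=\int\mathfrak{s}_n=1$, through a mass-accounting argument on the good set $M_n := \{|\rho_n-1|\le 1/2\}$. One has $\mathfrak{R}_n(M_n)\to 1$ and, on $M_n$, $|\rho_n^{-1}-1|\le 2|\rho_n-1|\le 1$; since $\rho_n\to 1$ in $\mathfrak{R}_n$-probability and the integrand is bounded, a bounded-convergence estimate gives $E_{\mathfrak{R}_n}[\mathbf{1}_{M_n}|\rho_n^{-1}-1|]\to 0$. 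Writing $\int_{M_n}\mathfrak{s}_n=\int_{M_n}\mathfrak{r}_n\rho_n^{-1}=\mathfrak{R}_n(M_n)+\int_{M_n}\mathfrak{r}_n(\rho_n^{-1}-1)$ and bounding the last term in absolute value by $E_{\mathfrak{R}_n}[\mathbf{1}_{M_n}|\rho_n^{-1}-1|]$, I obtain $\int_{M_n}\mathfrak{s}_n\to 1$. Because $M_n\subseteq W_n^c$ and $\int\mathfrak{s}_n=1$, this forces $\mathfrak{S}_n(W_n)=1-\int_{W_n^c}\mathfrak{s}_n\le 1-\int_{M_n}\mathfrak{s}_n\to 0$, completing the bound on $\mathfrak{S}_n(C_n)$, hence on $\mathfrak{S}_n(A_n)$; since $\delta>0$ is arbitrary, the conclusion follows.

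I expect the main obstacle to be precisely this last step: the naive attempt to push $\mathfrak{R}_n$-convergence in probability directly over to $\mathfrak{S}_n$ breaks down on $\{\rho_n<1/2\}$, and the only leverage available is the normalization of the two densities to total mass one. Everything else — the reduction to $C_n$ via the three set inclusions and the bounded-convergence estimate on $M_n$ — is routine once that idea is in place, so I would keep the write-up at the level of those inclusions together with the single identity for $\int_{M_n}\mathfrak{s}_n$.
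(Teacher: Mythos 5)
Your proof is correct, but it is organized quite differently from the paper's, and the comparison is instructive. The paper works directly with the good set $A_{n,\delta\varepsilon_n}:=\left\{(1-\delta\varepsilon_n)\mathfrak{s}_n\left(y_1^n\right)\le\mathfrak{r}_n\left(y_1^n\right)\le(1+\delta\varepsilon_n)\mathfrak{s}_n\left(y_1^n\right)\right\}$: the hypothesis gives $\mathfrak{R}_n\left(A_{n,\delta\varepsilon_n}\right)\to 1$, and the single change-of-measure inequality
\begin{equation*}
\mathfrak{R}_n\left(A_{n,\delta\varepsilon_n}\right)=\int\mathbf{1}_{A_{n,\delta\varepsilon_n}}\left(y_1^n\right)\frac{\mathfrak{r}_n\left(y_1^n\right)}{\mathfrak{s}_n\left(y_1^n\right)}\,\mathfrak{s}_n\left(y_1^n\right)dy_1^n\le(1+\delta\varepsilon_n)\,\mathfrak{S}_n\left(A_{n,\delta\varepsilon_n}\right),
\end{equation*}
combined with $\mathfrak{S}_n\left(A_{n,\delta\varepsilon_n}\right)\le 1$, forces $\mathfrak{S}_n\left(A_{n,\delta\varepsilon_n}\right)\to 1$; since on that very set $\left|\mathfrak{s}_n/\mathfrak{r}_n-1\right|\le\delta\varepsilon_n/(1-\delta\varepsilon_n)$, the conclusion is immediate. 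You instead bound the bad set: you split $\left\{\left|\rho_n^{-1}-1\right|>\delta\varepsilon_n\right\}$ into the bounded-ratio region $\left\{\rho_n\ge 1/2\right\}$, where smallness transfers pointwise via $\mathfrak{s}_n\le 2\mathfrak{r}_n$, and the region $\left\{\rho_n<1/2\right\}$, which you control by mass accounting on $M_n=\left\{\left|\rho_n-1\right|\le 1/2\right\}$ (the estimate $E_{\mathfrak{R}_n}\left[\mathbf{1}_{M_n}\left|\rho_n^{-1}-1\right|\right]\to 0$ plus $\int\mathfrak{s}_n=1$). Both proofs pivot on exactly the same two ingredients — a change of measure through the likelihood ratio on a set where that ratio is controlled, and the normalization of $\mathfrak{s}_n$, which the paper uses silently through $\mathfrak{S}_n\left(A_{n,\delta\varepsilon_n}\right)\le 1$ — so your diagnosis that normalization is "the only leverage" on the region where $\mathfrak{s}_n$ dominates is accurate; the paper's one-liner handles that region without comment precisely because it argues on the complement. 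What the paper's formulation buys is economy: choosing the good set already at scale $\delta\varepsilon_n$ makes your entire three-way decomposition unnecessary. Your Step 3 is essentially the paper's argument rerun at the crude scale $1/2$, which is why you then genuinely need the extra $L^1$-type step: the naive pointwise bound $\mathfrak{s}_n\ge\frac{2}{3}\mathfrak{r}_n$ on $M_n$ would only yield $\liminf_n\mathfrak{S}_n\left(M_n\right)\ge 2/3$, not $1$. What your version buys is that it makes explicit why the region invisible to $\mathfrak{R}_n$ is harmless, at the cost of roughly triple the length.
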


\begin{proof}
Denote 
\begin{equation*}
A_{n,\varepsilon_{n}}:=\left\{ y_{1}^{n}:(1-\varepsilon_{n})\mathfrak{s}%
_{n}\left( y_{1}^{n}\right) \leq\mathfrak{r}_{n}\left( y_{1}^{n}\right) \leq%
\mathfrak{s}_{n}\left( y_{1}^{n}\right) (1+\varepsilon_{n})\right\} .
\end{equation*}
It holds for all positive $\delta$%
\begin{equation*}
\lim_{n\rightarrow\infty}\mathfrak{R}_{n}\left(
A_{n,\delta\varepsilon_{n}}\right) =1.
\end{equation*}
Write 
\begin{equation*}
\mathfrak{R}_{n}\left( A_{n,\delta\varepsilon_{n}}\right) =\int \mathbf{1}%
_{A_{n,\delta\varepsilon_{n}}}\left( y_{1}^{n}\right) \frac{\mathfrak{r}%
_{n}\left( y_{1}^{n}\right) }{\mathfrak{s}_{n}(y_{1}^{n})}\mathfrak{s}%
_{n}(y_{1}^{n})dy_{1}^{n}.
\end{equation*}
Since 
\begin{equation*}
\mathfrak{R}_{n}\left( A_{n,\delta\varepsilon_{n}}\right) \leq
(1+\delta\varepsilon_{n})\mathfrak{S}_{n}\left(
A_{n,\delta\varepsilon_{n}}\right)
\end{equation*}
it follows that 
\begin{equation*}
\lim_{n\rightarrow\infty}\mathfrak{S}_{n}\left(
A_{n,\delta\varepsilon_{n}}\right) =1,
\end{equation*}
which proves the claim.
\end{proof}

As a direct by-product of Theorem \ref{th:princ} and Lemma \ref{Lemma:commute_from_p_n_to_g_n} we obtain  (\ref{th:cond_ponc_Rd_chap1_inverse}).

\begin{remark} \label{remark:gaussian_exact}
When the $\mathbf{X}_{i}$'s are i.i.d. Gaussian standard, the result of the approximation theorem are true for $k=n-1$ without the error term. Indeed, it holds $p(\left. \mathbf{X}_{1}^{n-1}=x_{1}^{n-1}\right\vert \mathbf{S}_{1,n}=na)=g_{a}\left( x_{1}^{n-1}\right) $ for all $x_{1}^{n-1}$ in$\left(\mathbb{R}^{d}\right)^{n-1}$. Even in more complicated case, the approximation theorem can be still true. Consider a very simple example. Let $d=s=2$ and $u(x,y)=ax+by$ with $a$ and $b$ known constant. One wants to estimate 
\[
P[\frac{1}{n}\sum_{i=1}^{n}u\left(\mathbf{X}_{i}^{(1)},\mathbf{X}_{i}^{(2)}\right)>c]
\]
with $c$ a known constant and $p_{\mathbf{X}}$ the two-dimension standard normal density.
Using the non-adaptive (or even adaptive) tilting method in this case, we will obtain a sampling density which is the product between a density of $\mathbf{X}_{i}^{(1)}$ and a density of $\mathbf{X}_{i}^{(2)}$ for $i\in(1,...,n).$ However, when looking closely to the conditioning event, we easily see that $\mathbf{X}_{i}^{(1)}$ and $\mathbf{X}_{i}^{(2)}$ are correlated. Indeed, when we calculate exactly the density of $\left(\mathbf{X}_{i}|\frac{1}{n}\sum_{i=1}^{n}u\left(\mathbf{X}_{i}^{(1)},\mathbf{X}_{i}^{(2)}\right)=c\right)$ and its approximate version $g(\mathbf{X}_{i}),$ the result of the approximation theorem are valid for $k=n-1.$
\end{remark}

\begin{remark} \label{lem:Validite__Edg} 
The Edgeworth expansion in the proof of the Theorem \ref{th:cond_ponc_Rd_chap1} are valid under the condition stated in the Theorem 6.4, p.205 of Barndorff-Nielsen et Cox (1990) \cite{BardnoffCox1990} for fixed $a$ and in the Remark 5 of \cite{BroniatowskiCaron2013} when $a=a_{n}$ is a convergent sequence.
\end{remark}

\section{Generalization to a more general conditionning event} \label{sec:extension_chap4}

In the previous section, the conditional density was approximated when the conditionning event writes as $\left(\mathbf{S}_{1,n}=na_{n}\right).$ For practice, we have to extend this result for a more general conditionning event. We still consider $\textbf{X}_{1}^{n}:=(\textbf{X}_{1},...,\textbf{X}_{n})$, a sequence of i.i.d. random vectors in $\mathbb{R}^{d}$ with density $p_{\mathbf{X}}$ and $u$ a measurable function defined from $\mathbb{R}^{d}$ to $\mathbb{R}^{s}$ with both $d,s\geq{1}$. 

Denote
\[\mathbf{U}_{1,n}=\sum_{i=1}^{n}u\left(\mathbf{X}_{i}\right).\]

We assume that $\mathbf{U}:=u\left(\mathbf{X}\right)$ has a density $p_{\mathbf{U}}$ (with p.m. $P_{\mathbf{U}}$) absolutely continuous with respect to Lebesgue measure on $\mathbb{R}^{s}.$ Consider conditioning event of the form
\begin{eqnarray}  \label{def:evt_cond_sur_f}
\left(\mathbf{U}_{1,n}:=u_{1,n}\right)
\end{eqnarray}
with $u_{1,n}/n$ a convergent sequence.
Futhermore, we assume that $u$ is such that the characteristic function of $\mathbf{U}$ belongs to $L^{r}$ for some $r\geq{1}.$

We assume that $\mathbf{U}$ satisfy the Cramer condition, meaning
\[\Phi_{\mathbf{U}}(t):=E[\exp<t,\mathbf{U}>]<\infty,\textsc{\ \ } t\in V(\underline{0})\subset\mathbb{R}^{s}.\]
and denote 
\begin{eqnarray}
m(t):= {}^{t}\nabla \log(\Phi_{\mathbf{U}}(t)),\textsc{\ \ } t\in V(\underline{0})\subset\mathbb{R}^{s}
\end{eqnarray}
and
\begin{eqnarray}
\kappa(t):= {}^{t}\nabla \nabla \log(\Phi_{\mathbf{U}}(t)),\textsc{\ \ } t\in V(\underline{0})\subset\mathbb{R}^{s}.
\end{eqnarray}
as the mean and the covariance matrix of the tilted density defined by
\begin{eqnarray} \label{def:pi_non_centre_U_r_d}
\pi_{\mathbf{U}}^{\alpha}(u) :=\frac{\exp<t,u>}{\Phi_{\mathbf{U}}(t)}p_{\mathbf{U}}(u)
\end{eqnarray}
where $t$ is the unique solution of $m(t)=\alpha$ for $\alpha$ in the convex hull of $P_{\mathbf{U}},$ see \cite{BarndorffNielsen1978_chap_multi}, p132.

We also defined  
\begin{eqnarray} \label{def:pi_non_centre_r_d}
\pi_{u}^{\alpha}(x) :=\frac{\exp<t,u(x)>}{\Phi_{\mathbf{U}}(t)}p_{\mathbf{X}}(x).
\end{eqnarray}

By extension with the case studied in Section \ref{sec:prop_princ_chap4}, we will denote $p_{u_{1,n}}$ the conditional density and $g_{u_{1,n}}$ its approximation.

We now state the general form of the approximating density.
Denote  $m_{0}=u_{1,n}/n$ and
\begin{eqnarray}
g_{0}(y_{1}|y_{0}):=\pi_{u}^{m_{0}}(y_{1})
\end{eqnarray}
with an arbitrary $y_{0}$ and $\pi_{u}^{m_{0}}$ defined in (\ref{def:pi_non_centre_r_d}).

For $1\leq{i\leq{k-1}}$, we recursively define $g(y_{i+1}|y_{1}^{i})$. Let $t_{i}\in\mathbb{R}^{d}$ be the unique solution of the equation
\begin{eqnarray} \label{equation_t_i}
m(t_{i})=m_{i}:=\frac{u_{1,n}-u_{1,i}}{n-i}
\end{eqnarray}
where $u_{1,i}=u(y_{1})+...+u(y_{i}).$

Denote
\begin{eqnarray}
\kappa_{(i,n)}^{j,l}:=\frac{d^{2}}{dt^{(j)}dt^{(l)}}\left(\log E_{\pi_{\mathbf{U}}^{m_{i}}}\exp <t,\mathbf{U}>\right)\left(0\right) 
\end{eqnarray}
and 
\begin{eqnarray}
\kappa_{(i,n)}^{j,l,m}:=\frac{d^{3}}{dt^{(j)}dt^{(l)}dt^{(m)}}\left(\log E_{\pi_{\mathbf{U}}^{m_{i}}}\exp <t,\mathbf{U}>\right)\left(0\right) .
\end{eqnarray}
for $j,l$ and $m$ in $\{1,...,s\}$

Denote
\begin{eqnarray}
g(y_{i+1}|y_{1}^{i}):=C_{i}\mathfrak{n}_{d}\left(u(y_{i+1});\beta\alpha+m_{0},\beta\right)p_{\mathbf{X}}(y_{i+1})
\end{eqnarray}
where $C_{i}$ is a normalizing factor and
\begin{eqnarray}
\alpha:=\left(t_{i}+\frac{\kappa_{(i,n)}^{-2}\gamma}{2(n-i-1)}\right)\\
\beta:=\kappa_{(i,n)}(n-i-1)
\end{eqnarray}
and $\gamma$ defined by 
\begin{eqnarray}
\gamma:=\left(\sum_{j=1}^{s} \kappa_{(i,n)}^{j,j,p}\right)_{1\leq{p}\leq{s}}
\end{eqnarray}

Then
\begin{eqnarray}
g_{u_{1,n}}(y_{1}^{k}):=g_{0}(y_{1}|y_{0})\prod_{i=1}^{k-1} g(y_{i+1}|y_{1}^{i})
\end{eqnarray}

\begin{theorem} \label{th:princ_general}
Assume (\ref{cond:A1}) et (\ref{cond:A2}).
\begin{itemize}
\item Let $Y_{1}^{k}$ a sample of $P_{u_{1,n}}.$
Then
\begin{eqnarray} \label{th:princ_general_1}
p\left(\textbf{X}_{1}^{k}=Y_{1}^{k}|\mathbf{U}_{1,n}=u_{1,n}\right)=g_{u_{1,n}}(Y_{1}^{k})
(1+o_{P_{u_{1,n}}}(1+\epsilon_{n}(\log n)^{2}))
\end{eqnarray}
\item Let $Y_{1}^{k}$ a sample of $G_{u_{1,n}}.$
Then
\begin{eqnarray} \label{th:princ_general_inverse}
p\left(\textbf{X}_{1}^{k}=Y_{1}^{k}|\mathbf{U}_{1,n}=u_{1,n}\right)=g_{u_{1,n}}(Y_{1}^{k})
(1+o_{G_{u_{1,n}}}(1+\epsilon_{n}(\log n)^{2}))
\end{eqnarray}
\end{itemize}
\end{theorem}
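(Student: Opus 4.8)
The plan is to reduce Theorem~\ref{th:princ_general} to Theorem~\ref{th:princ} by regarding the event $\left(\mathbf{U}_{1,n}=u_{1,n}\right)$ as a conditioning on the sum of the i.i.d.\ random vectors $\mathbf{U}_i:=u(\mathbf{X}_i)$, which take values in $\mathbb{R}^s$. By hypothesis the $\mathbf{U}_i$ have a density $p_{\mathbf{U}}$, satisfy the Cramer condition, and have characteristic function in $L^r$, so Theorem~\ref{th:princ} applies \emph{verbatim} to the sequence $(\mathbf{U}_1,\ldots,\mathbf{U}_n)$, with the ambient dimension $d$ replaced by $s$ and the target mean $a$ replaced by $m_0=u_{1,n}/n$. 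Denote by $g^{\mathbf{U}}_{u_{1,n}}$ the resulting approximating density on $\left(\mathbb{R}^s\right)^k$; its initial block is $\pi_{\mathbf{U}}^{m_0}$ and its recursive blocks are $g^{\mathbf{U}}(v_{i+1}\mid v_1^i)=C_i\,\mathfrak{n}_s(v_{i+1};\beta\alpha+m_0,\beta)\,p_{\mathbf{U}}(v_{i+1})$, with $t_i,\kappa_{(i,n)},\alpha,\beta,\gamma$ as in the statement (the defining equation $m(t_i)=(u_{1,n}-u_{1,i})/(n-i)$ is precisely the recursion of Theorem~\ref{th:princ} read on the $\mathbf{U}$-walk).

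First I would establish a disintegration identity linking the two conditional laws. Isolating the constraint on the last $n-k$ summands gives
\begin{equation*}
p\!\left(\mathbf{X}_1^k=Y_1^k\mid \mathbf{U}_{1,n}=u_{1,n}\right)=\frac{\prod_{i=1}^k p_{\mathbf{X}}(Y_i)\cdot p_{\mathbf{U}_{k+1,n}}(u_{1,n}-u_{1,k})}{p_{\mathbf{U}_{1,n}}(u_{1,n})},
\end{equation*}
together with the identical formula for the conditional density of $\mathbf{U}_1^k$ at $\big(u(Y_1),\ldots,u(Y_k)\big)$ in which each $p_{\mathbf{X}}(Y_i)$ is replaced by $p_{\mathbf{U}}(u(Y_i))$. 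Dividing the former by the latter yields the disintegration identity
\begin{equation*}
p\!\left(\mathbf{X}_1^k=Y_1^k\mid \mathbf{U}_{1,n}=u_{1,n}\right)=p\!\left(\mathbf{U}_1^k=\big(u(Y_1),\ldots,u(Y_k)\big)\mid \mathbf{U}_{1,n}=u_{1,n}\right)\prod_{i=1}^{k}\frac{p_{\mathbf{X}}(Y_i)}{p_{\mathbf{U}}(u(Y_i))},
\end{equation*}
whose right-hand product is exactly the disintegration of the law of each $\mathbf{X}_i$ along the fibres of $u$.

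Next I would apply Theorem~\ref{th:princ} to the $\mathbf{U}$-walk evaluated at $v_i=u(Y_i)$, and carry out a block-by-block cancellation. For the initial block, $\pi_{\mathbf{U}}^{m_0}(u(Y_1))\,p_{\mathbf{X}}(Y_1)/p_{\mathbf{U}}(u(Y_1))=\pi_u^{m_0}(Y_1)=g_0(Y_1\mid Y_0)$; for each recursive block the factor $p_{\mathbf{U}}(u(Y_{i+1}))$ inside $g^{\mathbf{U}}$ cancels against the denominator, leaving $C_i\,\mathfrak{n}_s(u(Y_{i+1});\beta\alpha+m_0,\beta)\,p_{\mathbf{X}}(Y_{i+1})=g(Y_{i+1}\mid Y_1^i)$. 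Consequently
\begin{equation*}
g^{\mathbf{U}}_{u_{1,n}}\!\big(u(Y_1),\ldots,u(Y_k)\big)\prod_{i=1}^{k}\frac{p_{\mathbf{X}}(Y_i)}{p_{\mathbf{U}}(u(Y_i))}=g_{u_{1,n}}(Y_1^k),
\end{equation*}
and combining this with the disintegration identity and the conclusion of Theorem~\ref{th:princ} for the $\mathbf{U}$-walk produces (\ref{th:princ_general_1}).

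The one point requiring genuine care — and which I expect to be the main obstacle — is the mode of the error term. Theorem~\ref{th:princ} furnishes an $o$-term in probability under the conditional law of $\mathbf{U}_1^k$ given $\mathbf{U}_{1,n}=u_{1,n}$, whereas (\ref{th:princ_general_1}) asks for $o_{P_{u_{1,n}}}$, i.e.\ convergence in probability under the conditional law of $\mathbf{X}_1^k$. These are reconciled by observing that, since $\mathbf{U}_i=u(\mathbf{X}_i)$, the image of $P_{u_{1,n}}$ under the map $Y_1^k\mapsto\big(u(Y_1),\ldots,u(Y_k)\big)$ is tautologically the conditional law of $\mathbf{U}_1^k$ given the same event; because the error term originates entirely in the Edgeworth expansions of the $\mathbf{U}$-sums and hence depends on $Y_1^k$ only through $\big(u(Y_1),\ldots,u(Y_k)\big)$, convergence in $P^{\mathbf{U}}$-probability transfers to convergence in $P_{u_{1,n}}$-probability at the same rate. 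Finally, (\ref{th:princ_general_inverse}) follows from (\ref{th:princ_general_1}) by Lemma~\ref{Lemma:commute_from_p_n_to_g_n} applied with $\mathfrak{r}_n=p_{u_{1,n}}$, $\mathfrak{s}_n=g_{u_{1,n}}$ and $\varepsilon_n=1+\epsilon_n(\log n)^2$, exactly as in the passage from (\ref{th:cond_ponc_Rd_chap1}) to (\ref{th:cond_ponc_Rd_chap1_inverse}).
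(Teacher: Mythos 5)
Your proposal is correct, and while it shares its algebraic core with the paper's argument, it is organized as a genuinely different proof. The paper does not use Theorem \ref{th:princ} as a black box; it re-runs that theorem's proof on the $\mathbf{U}$-walk, block by block: after the Bayes decomposition it writes
\begin{equation*}
p\left(\mathbf{X}_{i+1}=Y_{i+1}\mid\mathbf{U}_{i+1,n}=u_{1,n}-U_{1,i}\right)
=\frac{p_{\mathbf{X}}\left(\mathbf{X}_{i+1}=Y_{i+1}\right)}{p_{\mathbf{U}}\left(\mathbf{U}_{i+1}=u(Y_{i+1})\right)}\,\pi_{\mathbf{U}}^{m_{i}}\left(\mathbf{U}_{i+1}=u(Y_{i+1})\right)
\frac{\pi_{\mathbf{U}}^{m_{i}}\left(\mathbf{U}_{i+2,n}=u_{1,n}-U_{1,i+1}\right)}{\pi_{\mathbf{U}}^{m_{i}}\left(\mathbf{U}_{i+1,n}=u_{1,n}-U_{1,i}\right)}
\end{equation*}
(multiply and divide by $p_{\mathbf{U}}$, then tilting invariance), and then redoes the Edgeworth expansion of the last ratio ``following the steps'' of Theorem \ref{th:princ}. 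The product over $i$ of these block identities is precisely your disintegration identity, and your block-by-block cancellation is the same bookkeeping of the ratios $p_{\mathbf{X}}(Y_i)/p_{\mathbf{U}}(u(Y_i))$; the difference is that you then invoke Theorem \ref{th:princ} verbatim for the i.i.d.\ vectors $\mathbf{U}_i=u(\mathbf{X}_i)$ (whose hypotheses --- density, Cramer condition, characteristic function in $L^r$, convergence of $u_{1,n}/n$, $s>1$ --- are exactly the assumptions of Section \ref{sec:extension_chap4}) rather than repeating its Edgeworth machinery. Your organization buys modularity, and it makes explicit two points the paper glosses over: first, that the normalizing constants in the blocks of $g^{\mathbf{U}}_{u_{1,n}}$ and of $g_{u_{1,n}}$ agree, which follows from the pushforward identity $\int f(u(y))p_{\mathbf{X}}(y)\,dy=\int f(v)p_{\mathbf{U}}(v)\,dv$ that underlies your cancellation; second, that the error term delivered by Theorem \ref{th:princ}, which is small in probability under the conditional law of the $\mathbf{U}$-walk, transfers to $o_{P_{u_{1,n}}}$ because that conditional law is the image of $P_{u_{1,n}}$ under $y_1^k\mapsto(u(y_1),\ldots,u(y_k))$ --- a point the paper never addresses but which its own proof also needs, since the controls of Lemmas \ref{lem:min} and \ref{lem:max} must then be read on the $\mathbf{U}$-walk. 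Both proofs handle (\ref{th:princ_general_inverse}) identically via Lemma \ref{Lemma:commute_from_p_n_to_g_n}. Two notational caveats so that your reduction matches the paper's definitions exactly: the recursion (\ref{def:m_i_r_d}) must be read with $n/(n-i)$ in place of the (typographical) $n/(n-1)$, consistently with (\ref{equation_t_i}); and the Gaussian factor in the blocks of Section \ref{sec:extension_chap4} lives on $\mathbb{R}^s$, so your $\mathfrak{n}_s$ is the correct reading of the paper's $\mathfrak{n}_d$.
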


\begin{proof}
We only propose the first part of the proof of (\ref{th:princ_general_1}) since the proof's argument of Theorem \ref{th:princ} are used.

Denote: $U_{i,j}:=u(Y_{i})+...+u(Y_{j}).$

Evaluate:
\begin{align*}
p\left( \left. \mathbf{X}_{i+1}=Y_{i+1}\right\vert \mathbf{U}_{i+1,n}=u_{1,n}-U_{1,i}\right) \\
& =p_{\mathbf{X}}\left( \mathbf{X}_{i+1}=Y_{i+1}\right) \frac{p\left(\mathbf{U}_{i+2,n}=u_{1,n}-U_{1,i+1}\right)} {p\left(\mathbf{U}_{i+1,n}=u_{1,n}-U_{1,i}\right)}.
\end{align*}

Multiplying and dividing by $p_{\mathbf{U}}\left(\mathbf{U}_{i+1}=u(Y_{i+1})\right)$ , we use the invariance tilting under $\pi_{\mathbf{U}}^{m_{i}}.$ Then,
\begin{center}
$p\left(\left. \mathbf{X}_{i+1}=Y_{i+1}\right\vert \mathbf{U}_{i+1,n}=u_{1,n}-U_{1,i}\right)$
\end{center}
\begin{eqnarray*}
=\frac{p_{\mathbf{X}}\left(\mathbf{X}_{i+1}=Y_{i+1}\right)}{p_{\mathbf{U}}\left(\mathbf{U}_{i+1}=u(Y_{i+1})\right)} \pi_{\mathbf{U}}^{m_{i}}\left( \mathbf{U}_{i+1}=u(Y_{i+1})\right) \frac{\pi_{\mathbf{U}}^{m_{i}}\left(\mathbf{U}_{i+2,n}=u_{1,n}-U_{1,i+1}\right)} {\pi_{\mathbf{U}}^{m_{i}}\left(\mathbf{U}_{i+1,n}=u_{1,n}-U_{1,i}\right) } \\
\end{eqnarray*}

We proceed to a Edgeworth expansion following the step of the proof of the Theorem \ref{th:princ}. The proof of (\ref{th:princ_general_inverse}) is quite easy using Lemma \ref{Lemma:commute_from_p_n_to_g_n}.
\end{proof}

Now, we can extend our results from typical paths to the whole space $\left(\mathbb{R}^{d}\right)^{k}$. Indeed, convergence of the relative error on large sets imply that the total variation distance between the conditioned measure and its approximation goes to $0$ on the entire space.

\begin{theorem}
Under the hypotheses of Theorem \ref{th:princ_general} the total variation
distance between $P_{u_{1,n}}$ and $G_{u_{1,n}}$ goes to $0$ as $n$ tends to
infinity, and
\begin{equation*}
\lim_{n\rightarrow \infty }\int \left\vert p_{u_{1,n}}\left(
y_{1}^{k}\right) -g_{u_{1,n}}\left( y_{1}^{k}\right) \right\vert
dy_{1}^{k}=0.
\end{equation*}
\end{theorem}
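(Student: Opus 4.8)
The plan is to recognise the total variation distance as an $L^{1}$ distance taken under the approximating law $G_{u_{1,n}}$, and then to promote the in-probability estimate of Theorem \ref{th:princ_general} to a convergence in mean by a Scheff\'e-type argument. First I would observe that, because the Gaussian factor appearing in each conditional block $g(y_{i+1}|y_{1}^{i})$ is strictly positive, one has $g_{u_{1,n}}(y_{1}^{k})>0$ exactly when $\prod_{j}p_{\mathbf{X}}(y_{j})>0$; since $p_{u_{1,n}}$ is supported on that same set, $p_{u_{1,n}}$ is absolutely continuous with respect to $g_{u_{1,n}}$. Writing $Y_{1}^{k}\sim G_{u_{1,n}}$ and $R_{n}:=p_{u_{1,n}}(Y_{1}^{k})/g_{u_{1,n}}(Y_{1}^{k})$, this yields both
\[ \int \left\vert p_{u_{1,n}}(y_{1}^{k})-g_{u_{1,n}}(y_{1}^{k})\right\vert dy_{1}^{k}=E_{G_{u_{1,n}}}\left\vert R_{n}-1\right\vert \]
and the mass identity $E_{G_{u_{1,n}}}[R_{n}]=\int_{\{g_{u_{1,n}}>0\}}p_{u_{1,n}}\,dy_{1}^{k}=1$, valid for every $n$.

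Next I would extract convergence in probability of $R_{n}$ directly from the second part of Theorem \ref{th:princ_general}: under $G_{u_{1,n}}$ one has $R_{n}=1+o_{G_{u_{1,n}}}(1+\epsilon_{n}(\log n)^{2})$. Hypothesis (\ref{cond:A2}) makes $\epsilon_{n}(\log n)^{2}\to 0$, so the factor $1+\epsilon_{n}(\log n)^{2}$ is bounded and converges to $1$, whence the remainder is in fact an $o_{G_{u_{1,n}}}(1)$; that is, $R_{n}\to 1$ in $G_{u_{1,n}}$-probability.

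The only genuinely delicate step, and the one I expect to be the main obstacle, is the passage from convergence in probability to convergence in $L^{1}$, since pointwise or in-probability smallness of the relative error does not by itself control the integral over the whole space $(\mathbb{R}^{d})^{k}$ --- mass could a priori escape on the atypical paths. The mass identity $E_{G_{u_{1,n}}}[R_{n}]=1$ is precisely what rules this out. Concretely I would split $|R_{n}-1|=(R_{n}-1)^{+}+(R_{n}-1)^{-}$ and note that $(R_{n}-1)^{-}\leq 1$ because $R_{n}\geq 0$; as $(R_{n}-1)^{-}\to 0$ in probability, bounded convergence gives $E_{G_{u_{1,n}}}[(R_{n}-1)^{-}]\to 0$. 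Since $E_{G_{u_{1,n}}}[R_{n}-1]=0$, the positive part satisfies $E_{G_{u_{1,n}}}[(R_{n}-1)^{+}]=E_{G_{u_{1,n}}}[(R_{n}-1)^{-}]\to 0$, so that $E_{G_{u_{1,n}}}|R_{n}-1|\to 0$. Combined with the first display, this is the announced vanishing of the total variation distance. This is the standard Scheff\'e mechanism, with uniform integrability coming for free from the fact that both $p_{u_{1,n}}$ and $g_{u_{1,n}}$ are probability densities of total mass one; beyond this, I would only need to check the absolute-continuity bookkeeping carefully, the remaining steps being routine.
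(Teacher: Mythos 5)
Your argument is correct, but it is not the route the paper takes: the paper gives no proof at all here, deferring entirely to \cite{BroniatowskiCaron2013}, and the argument there is a typical-set decomposition rather than your Scheff\'e mechanism. In that argument one works with the set $D_{n}:=\{y_{1}^{k}:\vert g_{u_{1,n}}(y_{1}^{k})/p_{u_{1,n}}(y_{1}^{k})-1\vert<\delta_{n}\}$ for some $\delta_{n}\rightarrow 0$ dominating the error rate; part 1 of Theorem \ref{th:princ_general} gives $P_{u_{1,n}}(D_{n})\rightarrow 1$, Lemma \ref{Lemma:commute_from_p_n_to_g_n} transfers this to $G_{u_{1,n}}(D_{n})\rightarrow 1$, and then
\begin{equation*}
\int\vert p_{u_{1,n}}-g_{u_{1,n}}\vert \,dy_{1}^{k}\leq \int_{D_{n}}p_{u_{1,n}}\left\vert 1-\tfrac{g_{u_{1,n}}}{p_{u_{1,n}}}\right\vert dy_{1}^{k}+P_{u_{1,n}}\left(D_{n}^{c}\right)+G_{u_{1,n}}\left(D_{n}^{c}\right)\leq \delta_{n}+o(1),
\end{equation*}
so the escape of mass on atypical paths is controlled by the two complement probabilities. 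Your proof instead uses only part 2 of the theorem (the estimate sampled under $G_{u_{1,n}}$), together with the support bookkeeping $\{g_{u_{1,n}}=0\}\subseteq\{p_{u_{1,n}}=0\}$ (valid since the Gaussian factor and the constants $C_{i}$ are strictly positive and $p_{u_{1,n}}$ vanishes wherever $\prod_{j}p_{\mathbf{X}}(y_{j})=0$), which yields the mass identity $E_{G_{u_{1,n}}}[R_{n}]=1$; the Scheff\'e device $E[(R_{n}-1)^{+}]=E[(R_{n}-1)^{-}]\rightarrow 0$ then replaces the role played by $P_{u_{1,n}}(D_{n}^{c})\rightarrow 0$ in the referenced proof. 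Two small points worth making explicit in your write-up: first, the bounded-convergence step is legitimate even though the law $G_{u_{1,n}}$ changes with $n$, because for every $\epsilon>0$ one has $E_{G_{u_{1,n}}}[(R_{n}-1)^{-}]\leq\epsilon+G_{u_{1,n}}\left((R_{n}-1)^{-}>\epsilon\right)$; second, nondegeneracy of $\beta=\kappa_{(i,n)}(n-i-1)$ (positive definiteness of the tilted covariance, which holds under the Cramer and steepness assumptions) is what guarantees strict positivity of the Gaussian factor. With those noted, your proof is a sound and self-contained alternative: it trades the transfer lemma and part 1 of the theorem for an absolute-continuity argument, while the paper's (referenced) route avoids any discussion of supports at the price of needing the approximation under both sampling schemes.
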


\begin{proof}
See \cite{BroniatowskiCaron2013} for details.
\end{proof}

\section{How far is the approximation valid?} \label{sec:choi_k_chap4}

This section provides a rule leading to an effective choice of the crucial
parameter $k$ in order to achieve a given accuracy bound for the relative
error in Theorem \ref{th:princ}. In \cite{BroniatowskiCaron2013}, a effective rule for assessing the parameter $k$ has been proposed. We adapt this rule to the multivariate case under consideration. This rule is based on an asymptotic expansion which can be found in \cite{Jensen1995_chap_multi}, Chap.6, p.144, Formula (6.1.2). We state a multivariate version of this lemma and extends accordingly the new rule.

The accuracy of the approximation is measured through 
\begin{equation} \label{ERE_GD}
ERE(k):=E_{G_{u_{1,n}}}1_{D_{k}}\left( Y_{1}^{k}\right) \frac{%
p_{u_{1,n}}\left( Y_{1}^{k}\right) -g_{u_{1,n}}\left( Y_{1}^{k}\right) }{%
p_{u_{1,n}}\left( Y_{1}^{k}\right) } 
\end{equation}%
and 
\begin{equation}\label{VRE_GD}
VRE(k):=Var_{G_{u_{1,n}}}1_{D_{k}}\left( Y_{1}^{k}\right) \frac{%
p_{u_{1,n}}\left( Y_{1}^{k}\right) -g_{u_{1,n}}\left( Y_{1}^{k}\right) }{%
p_{u_{1,n}}\left( Y_{1}^{k}\right) }
\end{equation}%
respectively the expectation and the variance of the relative error of the
approximating scheme when evaluated on 
\begin{equation*}
D_{k}:=\left\{ y_{1}^{k}\in \mathbb{R}^{k}\text{ such that }\left\vert
g_{u_{1,n}}(y_{1}^{k})/p_{u_{1,n}}\left( y_{1}^{k}\right) -1\right\vert
<\delta _{n}\right\}
\end{equation*}
with $\epsilon _{n}\left( \log n\right) ^{2}/\delta _{n}\rightarrow 0$ and $%
\delta _{n}\rightarrow 0;$ therefore $G_{u_{1,n}}\left( D_{k}\right)
\rightarrow 1.$ The r.v.'s $Y_{1}^{k}$ are sampled under $%
g_{u_{1,n}}.$ Note that the density $p_{u_{1,n}}$ is usually unknown. The
argument is somehow heuristic and informal; nevertheless the rule is simple
to implement and provides good results. We assume that the set $D_{k}$ can
be substituted by $\mathbb{R}^{k}$ in the above formulas, therefore assuming
that the relative error has bounded variance, which would require quite a
lot of work to be proved under appropriate conditions, but which seems to
hold, at least in all cases considered by the author. We keep the above
notation omitting therefore any reference to $D_{k}.$

Consider a two-sigma confidence bound for the relative accuracy for a given $%
k$, defining 
\begin{equation}\label{CI_GD}
CI(k):=\left[ ERE(k)-2\sqrt{VRE(k)},ERE(k)+2\sqrt{VRE(k)}\right] .
\end{equation}

Let $\delta $ denote an acceptance level for the relative accuracy. Accept $%
k $ until $\delta $ belongs to $CI(k).$ For such $k$ the relative accuracy
is certified up to the level $5\%$ roughly.

The calculation of $VRE(k)$ and $ERE(k)$ should be carried out as follows.

We writes
\begin{align*}
VRE(k)^{2}& =E_{P_{\mathbf{X}}}\left(\frac{g_{u_{1,n}}^{3}\left(Y_{1}^{k}\right)}{p_{u_{1,n}}\left( Y_{1}^{k}\right) ^{2}p_{\mathbf{X}}\left(Y_{1}^{k}\right)}\right) \\
& -E_{P_{\mathbf{X}}}\left(\frac{g_{u_{1,n}}^{2}\left(Y_{1}^{k}\right)} {p_{u_{1,n}}\left(Y_{1}^{k}\right)p_{\mathbf{X}}\left( Y_{1}^{k}\right)}\right)^{2} \\
& =:A-B^{2}.
\end{align*}
By the Bayes formula,
\begin{equation} \label{Jensen_dans_k_limite_chap_multi}
p_{u_{1,n}}\left(Y_{1}^{k}\right)=p_{\mathbf{X}}\left(Y_{1}^{k}\right) \frac{np\left(\mathbf{U}_{k+1,n}/(n-k)=m_{k}\right)} {\left(n-k\right)p\left(\mathbf{U}_{1,n}/n=m_{0}\right)}. 
\end{equation}
with $m_{k}=m(t_{k})$ defined in (\ref{def:m_i_r_d}) and $m_{0}=u_{1,n}/n.$

\begin{lemma} (\cite{Jensen1995_chap_multi}, Chap.6, p.144, Formula (6.1.2)) \label{Lemma:Jensen_chap5} 
Let $\mathbf{U}_{1},...,\mathbf{U}_{n}$ a sampling of $n$ random variables i.i.d. with density $p_{\mathbf{U}}$ on $\mathbb{R}^{d}$ such as the moment generating function exists. Then, with $\nabla\left(\log\Phi_{\mathbf{U}}\right)(t)=u$ and $\Sigma(t):=^{t}\nabla\nabla\left(\log\Phi_{\mathbf{U}}\right)(t)$, it holds
\begin{equation*}
p_{\mathbf{U}_{1}^{n}/n}\left( u\right) =\frac{n^{d/2}\Phi_{\mathbf{U}}^{n}(t)\exp -n<t,u>} {\vert\Sigma(t)\vert^{1/2}\left(2\pi\right)^{d/2}}\left( 1+o(1)\right)
\end{equation*}
when $|u|$ is bounded.
\end{lemma}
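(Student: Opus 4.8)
The plan is to establish this multivariate saddlepoint formula by exponential tilting combined with a local central limit theorem evaluated at the tilted mean. First I would introduce the tilting parameter $t$ defined by $m(t)=u$, whose existence and uniqueness are guaranteed by the steepness of $\Phi_{\mathbf{U}}$, together with the associated tilted density $\pi_{\mathbf{U}}^{u}$ from (\ref{def:pi_non_centre_U_r_d}). Under $\Pi_{\mathbf{U}}^{u}$ the vectors $\mathbf{U}_{1},\dots,\mathbf{U}_{n}$ remain i.i.d., now with mean $u$ and covariance $\Sigma(t)$. Writing $f_{n}$ for the density of the sum $\mathbf{S}_{n}:=\mathbf{U}_{1}+\cdots+\mathbf{U}_{n}$ under $P_{\mathbf{U}}$ and $\tilde{f}_{n}$ for its density under $\Pi_{\mathbf{U}}^{u}$, the product form of the tilt yields the exact identity
\[
f_{n}(s)=\Phi_{\mathbf{U}}^{n}(t)\exp(-<t,s>)\,\tilde{f}_{n}(s).
\]

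Next I would pass to the normalized density and apply the local limit theorem. By the change of variables $s=nu$ (with Jacobian $n^{d}$), the density of the average satisfies $p_{\mathbf{U}_{1}^{n}/n}(u)=n^{d}f_{n}(nu)$, hence
\[
p_{\mathbf{U}_{1}^{n}/n}(u)=n^{d}\,\Phi_{\mathbf{U}}^{n}(t)\exp(-n<t,u>)\,\tilde{f}_{n}(nu).
\]
The crucial observation is that $nu$ is precisely the mean of $\mathbf{S}_{n}$ under $\Pi_{\mathbf{U}}^{u}$. I would then invoke a local limit theorem for $\tilde{f}_{n}$ at its center: the assumption that the characteristic function of $\mathbf{U}$ lies in $L^{r}$ ensures, after raising to a power $n\geq r$, that the tilted characteristic function is integrable, so that Fourier inversion applies and the standardized density converges to the value of the standard normal density at the origin, $\mathfrak{n}_{d}(0)=(2\pi)^{-d/2}$. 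Equivalently,
\[
\tilde{f}_{n}(nu)=\mathfrak{n}_{d}(nu;nu,n\Sigma(t))(1+o(1))=(2\pi n)^{-d/2}|\Sigma(t)|^{-1/2}(1+o(1)).
\]

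Substituting this estimate and simplifying the powers of $n$ — the factor $n^{d}$ from the Jacobian against the factor $n^{-d/2}$ from the Gaussian leaves $n^{d/2}$ — gives
\[
p_{\mathbf{U}_{1}^{n}/n}(u)=\frac{n^{d/2}\Phi_{\mathbf{U}}^{n}(t)\exp(-n<t,u>)}{|\Sigma(t)|^{1/2}(2\pi)^{d/2}}(1+o(1)),
\]
which is the claim. The main obstacle is the local limit step: one must check that the Fourier-inversion estimate for $\tilde{f}_{n}$ at the mean holds with a genuine $(1+o(1))$ remainder, and moreover \emph{uniformly} for $|u|$ bounded. Since the map $u\mapsto t(u)$ is then confined to a compact set on which $\Sigma(t)$ stays positive definite and the integrability of the tilted characteristic function is stable, this uniform control is exactly what the $L^{r}$ hypothesis is designed to deliver, following the argument of \cite{Jensen1995_chap_multi}.
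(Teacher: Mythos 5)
The paper offers no proof of this lemma at all: it is quoted verbatim as Formula (6.1.2) from Jensen (1995), Chap.~6, p.~144, and used as a black box in the choice-of-$k$ heuristic. Your reconstruction — exponential tilting at the saddlepoint $m(t)=u$, the exact conjugacy identity $f_{n}(s)=\Phi_{\mathbf{U}}^{n}(t)e^{-<t,s>}\tilde{f}_{n}(s)$, the Jacobian factor $n^{d}$, and a local CLT for the tilted sum evaluated at its mean $nu$ — is precisely the standard argument behind Jensen's formula, and the bookkeeping of the powers of $n$ and the constant $(2\pi)^{-d/2}|\Sigma(t)|^{-1/2}$ is correct. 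The one step you rightly flag as the obstacle is also the one that deserves more than an assertion: the $L^{r}$ hypothesis on the characteristic function of $\mathbf{U}$ does not automatically transfer to the tilted family, since the tilted characteristic function is $\tau\mapsto\Phi_{\mathbf{U}}(t+i\tau)/\Phi_{\mathbf{U}}(t)$, the Fourier transform of $e^{<t,\cdot>}p_{\mathbf{U}}$, whose integrability is a separate condition; Jensen handles this by imposing integrability uniformly for $t$ in compact subsets of the interior of the domain of $\Phi_{\mathbf{U}}$, which is also what gives the $o(1)$ uniformly over bounded $u$. Modulo deferring that technical point to the cited reference — as the paper itself does — your proof is sound.
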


Let $D$ and $N$ defined by respectively
\begin{equation*}
D:=\left[ \frac{\pi_{\mathbf{U}}^{m_{0}}(m_{0})}{p_{\mathbf{U}}(m_{0})}\right]^{n}
\end{equation*}
and
\begin{equation*}
N:=\left[\frac{\pi_{\mathbf{U}}^{m_{k}}\left(m_{k}\right)}{p_{\mathbf{U}}\left(m_{k}\right)}\right] ^{\left( n-k\right)}.
\end{equation*}
By (\ref{Jensen_dans_k_limite_chap_multi}) and Lemma \ref{Lemma:Jensen_chap5}, we have
\begin{equation*}
p_{u_{1,n}}\left(Y_{1}^{k}\right)=\left(\frac{n-k}{n}\right)^{\frac{d-2}{2}} p_{\mathbf{X}}\left(Y_{1}^{k}\right) \frac{D}{N}\frac{\vert\Sigma(t)\vert^{1/2}}{\vert\Sigma(t_{k})\vert^{1/2}}\left( 1+o_{p}(1)\right) .
\end{equation*}
Define
\begin{equation*}
A\left( Y_{1}^{k}\right):=\left(\frac{n}{n-k}\right)^{d-2} \left(\frac{g_{u_{1,n}}\left(Y_{1}^{k}\right)} {p_{\mathbf{X}}\left(Y_{1}^{k}\right)}\right)^{3}\left(\frac{N}{D}\right)^{2} \frac{\vert\Sigma(t)\vert}{\vert\Sigma(t_{k})\vert}
\end{equation*}
and simulate $L$ i.i.d. samples $Y_{1}^{k}(l)$, each one made of $k$ i.i.d. replications under $p_{\mathbf{X}}.$ The approximation of $A$ is obtained through Monte Carlo simulation:
\begin{equation*}
\widehat{A}:=\frac{1}{L}\sum_{l=1}^{L}A\left( Y_{1}^{k}(l)\right) .
\end{equation*}
Using the same approximation for $B$, define
\begin{equation*} 
B\left(Y_{1}^{k}\right):=\left(\frac{n}{n-k}\right)^{\frac{d-2}{2}} \left(\frac{g_{u_{1,n}}\left(Y_{1}^{k}\right)} {p_{\mathbf{X}}\left(Y_{1}^{k}\right)}\right)^{2}\left(\frac{N}{D}\right) \frac{\vert\Sigma(t)\vert^{1/2}}{\vert\Sigma(t_{k})\vert^{1/2}}
\end{equation*}
and
\begin{equation*}
\widehat{B}:=\frac{1}{L}\sum_{l=1}^{L}B\left( Y_{1}^{k}(l)\right) 
\end{equation*}
with the same $Y_{1}^{k}(l)$'s as above.

Set
\begin{equation*}
\overline{VRE}(k):=\widehat{A}-\left(\widehat{B}\right)^{2} 
\end{equation*}
\begin{equation*}
\overline{ERE}(k):=1-\widehat{B} 
\end{equation*}
\begin{equation*}
\overline{CI}(k):=\left[ \overline{ERE}(k)-2\sqrt{\overline{VRE}(k)}, \overline{ERE}(k)+2\sqrt{\overline{VRE}(k)}\right].
\end{equation*}

\section{Implementation}

As explained in the introduction, two main applications can be implemented using the approximation proved in this paper. The first one pertains to Importance Sampling scheme and the second one to conditional inference. For this purpose, the implementation of this method can be tricky. Most of the algorithms presented in \cite{BroniatowskiCaron2013} for the real case are still valid. However, the two major difficulties discussed in the real case are still as important.

First, to implement this approximation, we have to solve equation (\ref{equation_t_i}) in $t_{i}$ at each step of the recursive construction. Even in the real case (for example, considering Weibull distribution), the inverse function of $m$ has not a analytic expression and numerical methods have to be considered. 

Secondly,  the simulation of a sample $X_{1}^{k}$ with $g_{u_{1,n}}$ can be fast and easy when $\lim_{n\rightarrow \infty }u_{1,n}/n=Eu\left( \mathbf{X}\right)$. 
Indeed the r.v. $\mathbf{X}_{i+1}$ with density $g\left(
x_{i+1}|x_{1}^{i}\right) $ is obtained through a standard 
acceptance-rejection algorithm. This is in contrast with the case when the
conditioning value is in the range of a large deviation event, i.e. $\lim_{n\rightarrow \infty }u_{1,n}/n\neq Eu\left( \mathbf{X}\right),$ which
appears in a natural way in Importance sampling estimation for rare event
probabilities; then MCMC techniques can be used.

\section{Conclusion}

This paper extends the results of \cite{BroniatowskiCaron2013}. In future work, the author will focus on developing two mains applications: Importance Sampling scheme for multi-constraints probabilities and conditional inference in exponential curved family.

\appendix

\section{Proof of Lemmas}

The next three lemmas are similar to the Lemmas 21, 22 and 23 of \cite{BroniatowskiCaron2013}. This lemmas are stated for the coordinates of each random variable, so the proof are exactly the same as the one stated in the univariate case. However, the explicit proof can be found in \cite{Caron2012}, p.118 and followings.

\begin{lemma} \label{lemme:moments}
For all $j,p,q$ in $\{1,...,d\}$, it holds
\begin{enumerate}
\item $E_{P_{na}}\left( \mathbf{X}_{1}^{(j)}\right) =a^{(j)}$,
\item $E_{P_{na}}\left( \mathbf{X}_{1}^{(p)}\mathbf{X}_{2}^{(q)}\right) =a^{(p)}a^{(q)}+0\left( \frac{1}{n}\right)$
\item $E_{P_{na}}\left( \mathbf{X}_{1}^{(p)}\mathbf{X}_{1}^{(q)}\right) =\kappa_{p,q}(t)+a^{(p)}a^{(q)}+0\left( \frac{1}{n}\right) $
\end{enumerate} 
where $\kappa_{p,q}(t)=\left(^{t}\nabla\nabla\log\Phi(t)\right)_{p,q}$ and $t$ is such as $m(t)=a.$ 
\end{lemma}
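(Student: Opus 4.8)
The plan is to transport all three identities to the tilted product measure $\Pi^{a}$ and then to read off the conditional second moments from a local limit expansion. First I would use the tilting invariance of conditional laws recalled in the proof of Theorem~\ref{th:princ}, which gives $p(\mathbf{X}_{1}^{n}=\cdot\mid\mathbf{S}_{1,n}=na)=\pi^{a}(\mathbf{X}_{1}^{n}=\cdot\mid\mathbf{S}_{1,n}=na)$ once $t$ is chosen so that $m(t)=a$. Under $\Pi^{a}$ the vectors $\mathbf{X}_{1},\dots,\mathbf{X}_{n}$ are i.i.d. with mean $a$ and covariance $\kappa(t)$ as defined in (\ref{def:cov_matrix}); setting $\tilde{\mathbf{X}}_{i}:=\mathbf{X}_{i}-a$, the conditioning event reads $\sum_{i=1}^{n}\tilde{\mathbf{X}}_{i}=0$, and $E_{\Pi^{a}}\tilde{\mathbf{X}}_{1}^{(p)}\tilde{\mathbf{X}}_{1}^{(q)}=\kappa_{p,q}(t)$ is exactly the quantity appearing in item~3.

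Item~1 is exact and needs no expansion: conditioning fixes $\mathbf{S}_{1,n}=na$, so $E_{P_{na}}\mathbf{S}_{1,n}^{(j)}=na^{(j)}$, while the $\mathbf{X}_{i}$ remain exchangeable under $P_{na}$; hence $E_{P_{na}}\mathbf{X}_{i}^{(j)}$ does not depend on $i$ and equals $a^{(j)}$. In particular $E_{P_{na}}\tilde{\mathbf{X}}_{1}^{(j)}=0$, which I reuse when shifting back from $\tilde{\mathbf{X}}$ to $\mathbf{X}$ at the end of items~2 and~3.

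For items~2 and~3 I would use the Bayes factorization exactly as in the main proof. For item~3 the marginal conditional density of $\mathbf{X}_{1}$ is $\pi^{a}(x_{1})\,p_{\mathbf{S}_{2,n}}(na-x_{1})/p_{\mathbf{S}_{1,n}}(na)$, where $\mathbf{S}_{2,n}$ is a sum of $n-1$ tilted i.i.d. vectors; for item~2 the pair density of $(\mathbf{X}_{1},\mathbf{X}_{2})$ carries instead the factor $p_{\mathbf{S}_{3,n}}(na-x_{1}-x_{2})/p_{\mathbf{S}_{1,n}}(na)$. Applying the local limit / Edgeworth expansion (\ref{def:edgrd}) at the near-mean deviations $-\tilde{x}_{1}$, respectively $-(\tilde{x}_{1}+\tilde{x}_{2})$, these ratios equal $1$ up to a Gaussian curvature factor whose exponent is $O(1/n)$ for fixed $x_{1},x_{2}$, the multiplicative constants $(n/(n-1))^{d/2}$ and $(n/(n-2))^{d/2}$ being $1+O(1/n)$ and the Hermite corrections contributing only at order $1/n$ as well. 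Consequently the conditional density converges pointwise to the tilted product density with an $O(1/n)$ relative correction. Integrating $\tilde{x}_{1}^{(p)}\tilde{x}_{1}^{(q)}$ against $\pi^{a}$ returns the leading value $\kappa_{p,q}(t)$, whereas integrating $\tilde{x}_{1}^{(p)}\tilde{x}_{2}^{(q)}$ against $\pi^{a}\otimes\pi^{a}$ returns $0$ because each factor is a centered first moment; in both cases the surviving contribution comes from the quadratic exponent and is $O(1/n)$ (for item~2 one finds explicitly the cross term $-\kappa_{p,q}(t)/(n-2)$, the pure-square terms vanishing by the centering). Adding back the shift by $a$ and invoking item~1 turns these into $E_{P_{na}}\mathbf{X}_{1}^{(p)}\mathbf{X}_{1}^{(q)}=\kappa_{p,q}(t)+a^{(p)}a^{(q)}+O(1/n)$ and $E_{P_{na}}\mathbf{X}_{1}^{(p)}\mathbf{X}_{2}^{(q)}=a^{(p)}a^{(q)}+O(1/n)$.

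The main obstacle is to justify that the pointwise local expansion may be integrated against the unbounded integrands $\tilde{x}_{1}^{(p)}\tilde{x}_{1}^{(q)}$ and $\tilde{x}_{1}^{(p)}\tilde{x}_{2}^{(q)}$, i.e. that the remainders of (\ref{def:edgrd}) are uniform enough to yield genuine moment asymptotics rather than mere density estimates. This is precisely where the $L^{r}$ hypothesis on the characteristic function and the Cramér condition enter: together they give a uniform local limit theorem with a controlled remainder, and the light tails of $\pi^{a}$ supply an integrable dominating function that lets one discard the contribution of large $x_{1},x_{2}$. Since all three identities involve only fixed coordinates of $\mathbf{X}_{1}$ and $\mathbf{X}_{2}$, this control is carried out coordinatewise exactly as in the scalar statements of \cite{BroniatowskiCaron2013} and \cite{Caron2012}, so the multivariate argument is identical.
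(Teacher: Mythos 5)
Your proposal is correct and matches the paper's approach: the paper itself gives no inline proof but defers to the univariate Lemmas 21--23 of \cite{BroniatowskiCaron2013} (detailed in \cite{Caron2012}), whose arguments are precisely what you use --- exchangeability for the first moment, and tilting invariance plus the Bayes factorization with an Edgeworth/local-limit expansion of the density ratio (with the uniformity of the remainder justifying integration) for the second moments, applied coordinatewise.
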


\begin{lemma} \label{lem:min}
Under (\ref{cond:A1}),  and for all $j$ between $1$ and $d$, it holds
\begin{eqnarray}
\max_{1\leq i\leq k}\left\vert m_{i}^{(j)}\right\vert =a^{(j)}+o_{P_{na}}\left( \epsilon_{n}\right) .
\end{eqnarray}
It also holds, for all $j,r,s$ in $\{1,...,d\}$, $\max_{1\leq i\leq k}\left\vert \kappa_{(i,n)}^{j,r}\right\vert$ and $\max_{1\leq i\leq k}\left\vert \kappa_{(i,n)}^{j,r,s}\right\vert$ tend in $P_{na}$ probability respectively, to $(\sigma_{j,r})$, the generic term of the covariance matrix and to $(\kappa^{j,r,s})$ the joint cumulant of of $\pi^{\bar{a}}$ where $\bar{a}=\lim_{n\to\infty}a_{n}=\bar{a}.$
\end{lemma}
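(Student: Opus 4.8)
The plan is to treat the statement coordinate by coordinate, reducing it to the scalar argument of the univariate case. Fix $j\in\{1,\dots,d\}$ and work with the real sequence $X_1^{(j)},\dots,X_n^{(j)}$. The starting observation is that, under $P_{na}$, the conditioning event $(\mathbf S_{1,n}=na)$ is symmetric in the summands, so $(\mathbf X_1,\dots,\mathbf X_n)$ is exchangeable and $\sum_{l=1}^n X_l^{(j)}=na^{(j)}$ is a constant. From the definition of $m_i$ as the conditional mean of one remaining increment (see (\ref{def:m_i_r_d})) one has the identity $m_i^{(j)}-a^{(j)}=-(n-i)^{-1}(s_{1,i}^{(j)}-ia^{(j)})$. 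Writing $W_i:=s_{1,i}^{(j)}-ia^{(j)}$ and $M_i:=W_i/(n-i)$, exchangeability gives $E_{P_{na}}[X_{i+1}^{(j)}\mid\mathcal F_i]=(na^{(j)}-s_{1,i}^{(j)})/(n-i)$ for $\mathcal F_i=\sigma(\mathbf X_1,\dots,\mathbf X_i)$, and a short computation shows that $(M_i)_{0\le i\le k}$ is a martingale. Thus $\max_{1\le i\le k}|m_i^{(j)}-a^{(j)}|=\max_{1\le i\le k}|M_i|$ is the running maximum of a martingale.

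Next I would apply Doob's $L^2$ maximal inequality, $E_{P_{na}}[\max_{i\le k}M_i^2]\le 4\,E_{P_{na}}[M_k^2]$, and estimate the endpoint second moment. Because $\sum_{l=1}^n X_l^{(j)}$ is constant, $\mathrm{Cov}_{P_{na}}(X_1^{(j)},\sum_{l}X_l^{(j)})=0$, which forces $\mathrm{Cov}_{P_{na}}(X_1^{(j)},X_2^{(j)})=-\mathrm{Var}_{P_{na}}(X_1^{(j)})/(n-1)$; together with the bound $\mathrm{Var}_{P_{na}}(X_1^{(j)})=\kappa_{j,j}(t)+O(1/n)$ from Lemma \ref{lemme:moments}, this yields the bridge variance $\mathrm{Var}_{P_{na}}(W_k)=k\,\mathrm{Var}_{P_{na}}(X_1^{(j)})\,(n-k)/(n-1)$, hence $E_{P_{na}}[M_k^2]=O(1/(n-k))$. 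Markov's inequality then gives, for every $\eta>0$, $P_{na}(\max_{i\le k}|m_i^{(j)}-a^{(j)}|>\eta\epsilon_n)\le C/(\eta^2\epsilon_n^2(n-k))$, which tends to $0$ by (\ref{cond:A1}). This is precisely the content of the first assertion, $\max_{1\le i\le k}|m_i^{(j)}-a^{(j)}|=o_{P_{na}}(\epsilon_n)$.

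For the cumulant statements I would argue by continuity. The quantities $\kappa_{(i,n)}^{j,r}$ and $\kappa_{(i,n)}^{j,r,s}$ are the second and third derivatives of $\log\Phi$ evaluated at $t_i=m^{-1}(m_i)$, and by the steepness and analyticity of the moment generating function both $m^{-1}$ and these derivatives are uniformly continuous on a neighborhood of $t(\bar a)$. The first part gives $\max_{i\le k}|m_i^{(j)}-a^{(j)}|\to 0$ in $P_{na}$-probability, and since $a_n\to\bar a$ we get $\max_{i\le k}|m_i-\bar a|\to 0$; hence with probability tending to $1$ all the $t_i$ lie in an arbitrarily small ball around $t(\bar a)$, and uniform continuity transfers this to $\max_{i\le k}|\kappa_{(i,n)}^{j,r}-\sigma_{j,r}|\to 0$ and $\max_{i\le k}|\kappa_{(i,n)}^{j,r,s}-\kappa^{j,r,s}|\to 0$ in probability, where $\sigma_{j,r}$ and $\kappa^{j,r,s}$ are the covariance and third cumulant of $\pi^{\bar a}$. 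The main obstacle is the sharp endpoint bound $E_{P_{na}}[M_k^2]=O(1/(n-k))$: a naive variance estimate only gives $O(n/(n-k)^2)$, which is too weak to match (\ref{cond:A1}); one genuinely needs the negative $O(1/n)$ correlation between distinct summands forced by the conditioning, i.e. the Brownian-bridge scaling, so that the factor $1/(n-k)$ appears and (\ref{cond:A1}) becomes exactly the right hypothesis.
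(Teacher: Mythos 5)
Your proposal is correct and is essentially the route behind the paper's proof: the paper offers no argument of its own for this lemma, deferring coordinate by coordinate to the univariate Lemmas 21--23 of \cite{BroniatowskiCaron2013} (explicit proofs in \cite{Caron2012}), and what you reconstruct is precisely that maximal-inequality mechanism --- exchangeability of the conditioned sample, the martingale property of $(s_{1,i}^{(j)}-ia^{(j)})/(n-i)$, Doob's $L^{2}$ inequality with the bridge variance $O(1/(n-k))$ coming from the negative correlation forced by the constraint (so that (\ref{cond:A1}) is exactly the right hypothesis), and a steepness/continuity transfer from $\max_{i}\vert m_{i}-a\vert\to 0$ to the tilted second and third cumulants. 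Two minor remarks: you implicitly corrected the typo in (\ref{def:m_i_r_d}), whose right-hand side should be $\frac{n}{n-i}\left(a-\frac{s_{1,i}}{n}\right)$ as (\ref{equation_t_i}) confirms; and your formulation $\max_{1\leq i\leq k}\left\vert m_{i}^{(j)}-a^{(j)}\right\vert=o_{P_{na}}\left(\epsilon_{n}\right)$ is the statement actually used in the sequel (e.g.\ in Lemma \ref{lem:maxcent}), the lemma's display being a loose restatement of it.
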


\begin{lemma} \label{lem:max}
For all $j$ between $1$ and $d$, it holds
\begin{eqnarray}
\max\left(\vert\mathbf{X}_{1}^{(j)}\vert,...,\vert\mathbf{X}_{n}^{(j)}\vert\right)=O_{P_{na}}(\log n)
\end{eqnarray}
\end{lemma}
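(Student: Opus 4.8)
The plan is to control the tail of a single coordinate under the conditional law and then pay a factor $n$ for the maximum. First I would use the union bound together with the exchangeability of $\mathbf{X}_{1}^{n}$ under $P_{na}$ to write, for $M>0$,
\begin{equation*}
P_{na}\left(\max_{1\leq i\leq n}\vert\mathbf{X}_{i}^{(j)}\vert>M\log n\right)\leq n\,P_{na}\left(\vert\mathbf{X}_{1}^{(j)}\vert>M\log n\right),
\end{equation*}
so that it suffices to prove that the right-hand side vanishes as $n\to\infty$ once $M$ is large enough.

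To estimate the conditional marginal I would exploit the tilting invariance already invoked in the proof of Theorem \ref{th:princ}. Choosing $t$ with $m(t)=a$, the conditional law of $\mathbf{X}_{1}^{n}$ given $\mathbf{S}_{1,n}=na$ coincides under $p$ and under the tilted density $\pi^{a}$; moreover the exponential and $\Phi(t)$ factors cancel in the Bayes ratio, giving
\begin{equation*}
P_{na}\left(\mathbf{X}_{1}\in dx\right)=\pi^{a}(x)\,\frac{\pi^{a}\left(\mathbf{S}_{2,n}=na-x\right)}{\pi^{a}\left(\mathbf{S}_{1,n}=na\right)}\,dx,
\end{equation*}
where under $\pi^{a}$ the summands are i.i.d. with mean exactly $a$ and covariance $\kappa$. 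The decisive point is that the conditioning value $na$ now sits at the mean of the walk, so both densities in the ratio can be handled by a local limit theorem.

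A local central limit theorem for the sums of the i.i.d. tilted vectors (legitimate because the characteristic function of $\mathbf{X}$ lies in $L^{r}$) would show that $\pi^{a}\left(\mathbf{S}_{1,n}=na\right)$, evaluated at the mean, is of order $n^{-d/2}$, while $\pi^{a}\left(\mathbf{S}_{2,n}=na-x\right)$, evaluated at distance $\vert a-x\vert=O(\log n)=o(\sqrt{n})$ from the mean $(n-1)a$, equals the same order times a Gaussian factor $\exp\left(-\,{}^{t}(a-x)\kappa^{-1}(a-x)/2(n-1)\right)\leq 1$. Their ratio is therefore bounded by a constant $C$ uniformly over $\vert x\vert=O(\log n)$, whence $P_{na}(\mathbf{X}_{1}\in dx)\leq C\,\pi^{a}(x)\,dx$ on that range, the bound being uniform in $n$ because $a=a_{n}$ converges.

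Since $\pi^{a}$ inherits the Cramer condition from $p$, its $j$-th marginal has a finite moment generating function near the origin, and a Chernoff bound gives $\int_{\vert x^{(j)}\vert>M\log n}\pi^{a}(x)\,dx\leq C'n^{-cM}$ for some $c,C'>0$. Combining the three displays yields a bound of order $C''n^{1-cM}$, which tends to $0$ for any $M>1/c$ and proves the asserted rate $O_{P_{na}}(\log n)$. I expect the uniform local limit step to be the main obstacle: one must ensure that the local CLT approximation for the density of the sum of $n-1$ tilted vectors holds uniformly throughout the moderate-deviation window $\vert a-x\vert=O(\log n)$, which is exactly where the $L^{r}$ integrability of the characteristic function and the steepness of $\Phi$ enter.
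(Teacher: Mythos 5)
Your overall strategy --- exchangeability plus a union bound, tilting so that the conditioning point $na$ sits at the mean of the tilted walk, a local limit theorem to control the ratio of densities, and a Chernoff bound on the tilted marginal --- is essentially the route of the proof this paper relies on. (The paper itself gives no argument for this lemma: it defers to the univariate Lemmas 21--23 of Broniatowski and Caron (2013) and to Caron (2012), applied coordinate by coordinate, and that univariate proof is the one you reconstructed.)

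However, as written your argument has one genuine hole, and it sits exactly on the event you need to estimate. You establish the domination
\begin{equation*}
P_{na}\left(\mathbf{X}_{1}\in dx\right)\leq C\,\pi^{a}(x)\,dx
\end{equation*}
only for $\vert x\vert=O(\log n)$, by comparing $\pi^{a}\left(\mathbf{S}_{2,n}=na-x\right)$ with a Gaussian factor at distance $\vert a-x\vert=o(\sqrt{n\,})$ from the mean. But you then integrate this domination over $\left\{ x:\vert x^{(j)}\vert>M\log n\right\}$, an unbounded region which is not contained in that window: for, say, $\vert x\vert\asymp n$, the point $na-x$ lies at distance of order $n$ from the mean $(n-1)a$ of $\mathbf{S}_{2,n}$, the pointwise Gaussian approximation is worthless there in relative terms, and your displayed bound gives no control of the ratio. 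The fix is standard and uses only the hypotheses you already invoke: replace the pointwise Gaussian comparison by the \emph{uniform} local limit theorem (valid since the characteristic function of $\mathbf{X}$, hence of the tilted vector, lies in $L^{r}$), which yields the sup-norm bound $\sup_{z}\pi^{a}\left(\mathbf{S}_{2,n}=z\right)\leq C(n-1)^{-d/2}$, while the denominator satisfies $\pi^{a}\left(\mathbf{S}_{1,n}=na\right)\geq c\,n^{-d/2}$ by the same theorem evaluated at the mean. This bounds the ratio, hence the domination, uniformly over \emph{all} $x\in\mathbb{R}^{d}$; your Chernoff estimate and the union bound then close the proof, with the uniformity in $n$ of all constants following, as you correctly note, from the convergence of $a_{n}$ (this is the triangular-array point addressed in Remark \ref{lem:Validite__Edg}).
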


\begin{lemma} \label{lem:maxcent}
Denote $\mathbf{V}_{i+1}:=\kappa_{(i,n)}^{-1/2}(\mathbf{X}_{i+1}-m_{i,n}).$ Then,
\begin{eqnarray}
\forall j \in \{1,...,d\} \textsc{\ \ } \max\left(\vert\mathbf{V}_{1}^{(j)}\vert,...,\vert\mathbf{V}_{n}^{(j)}\vert\right)=O_{P_{na}}(\log n)
\end{eqnarray}
\end{lemma}

\begin{proof}
Let $j \in \{1,...,d\}$ and $i \in \{1,...,n\}$
\begin{align*}
|\mathbf{V}_{i}^{(j)}|=|[\kappa_{(i,n)}^{-1/2}(\mathbf{X}_{i+1}-m_{i,n})]^{(j)}|\\
&\leq{\displaystyle{\sup_{1\leq{l\leq{d}}}|\alpha_{j,l}|} \displaystyle{\sup_{1\leq{l\leq{d}}}|\mathbf{X}_{i+1}^{(l)}-m_{i,n}^{(l)}|}}\\
&\leq{\sup_{1\leq{l\leq{d}}}|\alpha_{j,l}|(\sup_{1\leq{l\leq{d}}}|\mathbf{X}_{i+1}^{(l)}| +\sup_{1\leq{l\leq{d}}}|m_{i,n}^{(l)}|)}\\
&\leq{\sup_{1\leq{l\leq{d}}}|\alpha_{j,l}|(O_{P_{na}}(\log n)+a^{(j)}+o_{P_{na}}\left( \epsilon_{n}\right))}\\
&\leq{C O_{P_{na}}(\log n)}
\end{align*}
using Lemma \ref{lem:min} and \ref{lem:max} and, for simplicity, denoting $\left(\kappa_{(i,n)}^{-1/2}\right)_{1\leq{j,l\leq{d}}}:=(\alpha_{j,l})_{1\leq{j,l\leq{d}}}.$

Then
\begin{eqnarray*}
\max_{1\leq{i\leq{k-1}}}\vert\mathbf{V}_{i}^{(j)}\vert \leq{\sup_{1\leq{l\leq{d}}}|\alpha_{j,l}| (\max_{1\leq{i\leq{k-1}}}\sup_{1\leq{l\leq{d}}}|\mathbf{X}_{i+1}^{(l)}| +\max_{1\leq{i\leq{k-1}}}\sup_{1\leq{l\leq{d}}}|m_{i,n}^{(l)}|)}\\
\end{eqnarray*}
Finally, it holds
\begin{eqnarray*}
\max_{1\leq{i\leq{k-1}}}\vert\mathbf{V}_{i}^{(j)}\vert\leq{C O_{P_{na}}(\log n)}
\end{eqnarray*}
\end{proof}

\section{Proof of Theorem \ref{th:princ}}

This proof follows the same step as the proof of Theorem 2 of \cite{BroniatowskiCaron2013}.

Denote
\begin{eqnarray} \label{def:U}
\mathbf{V}_{i+1}:=\kappa_{(i,n)}^{-1/2}(\textbf{X}_{i+1}-m_{i,n})
\end{eqnarray}
and $\mathbf{V}_{i+2,n}:=\sum_{j=i+2}^{n}\mathbf{V}_{j}.$

Under $\pi ^{m_{i}}$, $\textbf{V}_{i+1}$ is centered and has for covariance matrix $I_{d}$.

Denote $\overline{\pi_{n-i-1}^{m_{i}}}$ the density of the partial normalized sum $\textbf{V}_{i+2,n}/(\sqrt{n-i-1})$ when the random vectors are i.i.d. with density $\pi ^{m_{i}}.$ We evaluate $\overline{\pi _{n-i-1}^{m_{i}}}$ at $U_{i+1}=\kappa_{(i,n)}^{-1/2}(Y_{i+1}-m_{i})$

\[p(\textbf{X}_{i+1}=Y_{i+1}\vert\textbf{S}_{i+1,n}=na -S_{1,i})\]
\[=\frac{\sqrt{n-i}}{\sqrt{n-i-1}}\pi ^{m_{i}}(\textbf{X}_{i+1}=Y_{i+1}) \frac{\overline{\pi _{n-i-1}^{m_{i}}}(-U_{i+1}/\sqrt{n-i-1})}{\overline{\pi
_{n-i}^{m_{i,n}}}(0)}.\]

Under the hypothesis of Section \ref{sec:intro_chap4}, an Edgeworth can be performed for the numerator and the denominator.
Denote $Z_{i+1}:=-U_{i+1}/\sqrt{n-i-1}.$ 

\subsubsection{Edgeworth expansion}

We begin by the numerator.

\[\overline{\pi _{n-i-1}^{m_{i,n}}}(Z_{i+1})=\mathfrak{n}_{d}(Z_{i+1})
[ 
\begin{array}{c}
1+\frac{1}{\sqrt{n-i-1}}Q_{3}(Z_{i+1})+\frac{1}{n-i-1}Q_{4}(Z_{i+1}) 
\end{array}]
+O_{P_{na}}( \frac{Q_{4}(Z_{i+1}) }{( n-i-1) ^{3/2}}) 
\]

uniformly in $Z_{i+1}$ with $Q_{3}$ and $Q_{4}$ defined in Section \ref{sec:edg} by 
\begin{eqnarray*}
Q_{3}(x):=\frac{1}{6} \kappa^{j,l,m} h_{jlm}(x)
\end{eqnarray*}
and
\begin{eqnarray*}
Q_{4}(x):=\frac{1}{24} \kappa^{j,l,m,q}h_{jlmq}(x) + \frac{1}{72}\kappa^{j,l,m}\kappa^{q,r,s}h_{jlmqrs}(x).
\end{eqnarray*}

We want to obtain a polynomial expansion in terms of power $(n-i)$. The cumulants in the Hermite tensoriel moment are the cumulants of $U$. At the end of this section, when we will turn back to the $X_{i}$'s, we will have to be careful about this cumulants. 

\paragraph*{Study of $Q_{3}$.} It holds

\[\frac{Q_{3}(Z_{i+1})}{\sqrt{n-i-1}}
=\frac{1}{6}\sum_{j=1}^{d}\sum_{l=1}^{d}\sum_{m=1}^{d}
\kappa_{(i,n)}^{j,l,m}(\frac{U_{i+1}^{(j)}U_{i+1}^{(l)}U_{i+1}^{(m)}}{(n-i-1)^{2}}+
\frac{\kappa_{j,l}^{(i,n)}U_{i+1}^{(m)}}{n-i-1}[3]).\]

Using Lemma \ref{lem:max},

\[\frac{Q_{3}(Z_{i+1})}{\sqrt{n-i-1}}
=\frac{1}{6}\sum_{j=1}^{d}\sum_{l=1}^{d}\sum_{m=1}^{d}
\kappa_{(i,n)}^{j,l,m}\frac{\kappa_{j,l}^{(i,n)}U_{i+1}^{(m)}}{n-i-1}[3]
+\frac{O_{P_{na}}((\log n)^{3})}{(n-i-1)^{2}}.\]

Using Section \ref{sec:moments}, we have

\[\kappa_{j,l}^{(i,n)}U_{i+1}^{(m)}[3]=\kappa_{j,l}^{(i,n)}U_{i+1}^{(m)}
+\kappa_{j,m}^{(i,n)}U_{i+1}^{(l)}+\kappa_{l,m}^{(i,n)}U_{i+1}^{(j)}\]

The covariance matrix of $U_{i+1}$ is the identity matrix. So, using the invariance of the cumulants by indice permutations and denoting

\begin{eqnarray} \label{def:gamma}
\gamma=(\sum_{j=1}^{d}\kappa_{(i,n)}^{j,j,m})_{1\leq{m}\leq{d}},
\end{eqnarray}
it holds

\[\frac{Q_{3}(Z_{i+1})}{\sqrt{n-i-1}}=\frac{1}{2(n-i-1)} ^{t}U_{i+1}\gamma+\frac{O_{P_{na}}((\log n)^{3})}{(n-i-1)^{2}}.\]

\paragraph*{Study of $Q_{4}$.} Let split this study in two parts. Denote

\begin{eqnarray*}
A_{1}:=\kappa_{(i,n)}^{j,l,m,p}h_{jlmp}(Z_{i+1})\\
A_{2}:=\kappa_{(i,n)}^{j,l,m}\kappa_{(i,n)}^{p,q,r}h_{jlmpqr}(Z_{i+1})
\end{eqnarray*}

With this notations, $Q_{4}$ can be rewritten
\begin{eqnarray}
\frac{Q_{4}(x)}{n-i-1}=\frac{A_{1}}{24(n-i-1)}+\frac{A_{2}}{72(n-i-1)}
\end{eqnarray}

Using the notations of section \ref{sec:herm},

\[h_{jlmp}(Z_{i+1})=\frac{U_{i+1}^{(j)}U_{i+1}^{(l)}U_{i+1}^{(m)}U_{i+1}^{(p)}}
{(n-i-1)^{2}}-\kappa_{j,l}^{(i,n)}\frac{U_{i+1}^{(m)}U_{i+1}^{(p)}[6]}{n-i-1}
+\kappa_{j,p}^{(i,n)}\kappa_{m,p}^{(i,n)}[3]\]

Using Lemma \ref{lem:max}, \[\frac{U_{i+1}^{(j)}U_{i+1}^{(l)}U_{i+1}^{(m)}U_{i+1}^{(p)}}{(n-i-1)^{3}}=\frac{O_{P_{na}}((\log n)^{4})}{(n-i-1)^{3}}\]

and
\[\kappa_{j,l}\frac{U_{i+1}^{(m)}U_{i+1}^{(p)}[6]}{(n-i-1)^{2}}=\frac{O_{P_{na}}((\log n)^{2})}{(n-i-1)^{2}}\]

Finally,

\[\frac{A_{1}}{24(n-i-1)}= \frac{\delta_{1}^{(i,n)}}{n-i-1}+\frac{O_{P_{na}}((\log n)^{2})}{(n-i-1)^{2}}\]

with $\delta_{1}^{(i,n)}$ defined as

\begin{eqnarray} \label{def:f1}
\delta_{1}^{(i,n)}:=\frac{1}{8}\sum_{j=1}^{d}\sum_{m=1}^{d}\kappa_{(i,n)}^{j,m}
\end{eqnarray}

Recall that

\[h_{jlmpqr}(Z_{i+1})=\frac{U_{i+1}^{(j)}U_{i+1}^{(l)}U_{i+1}^{(m)}U_{i+1}^{(p)}
U_{i+1}^{(q)}U_{i+1}^{(r)}}{(n-i-1)^{3}}-\frac{\kappa_{j,l}^{(i,n)}U_{i+1}^{(m)}
U_{i+1}^{(p)}U_{i+1}^{(q)}U_{i+1}^{(r)}[15]}{(n-i-1)^2}\]
\[+\frac{\kappa_{j,l}^{(i,n)}\kappa_{m,p}^{(i,n)}U_{i+1}^{(q)}U_{i+1}^{(r)}[45]}
{n-i-1}-\kappa_{j,l}^{(i,n)}\kappa_{m,p}^{(i,n)}\kappa_{q,r}^{(i,n)}[15]\]

Using once again Lemma \ref{lem:max}, it holds

\[\frac{U_{i+1}^{(j)}U_{i+1}^{(l)}U_{i+1}^{(m)}U_{i+1}^{(p)}
U_{i+1}^{(q)}U_{i+1}^{(r)}}{(n-i-1)^{4}}=\frac{O_{P_{na}}((\log n)^{6})}{(n-i-1)^{4}},\]

\[\frac{\kappa_{j,l}^{(i,n)}U_{i+1}^{(m)}
U_{i+1}^{(p)}U_{i+1}^{(q)}U_{i+1}^{(r)}[15]}{(n-i-2)^3}=\frac{O_{P_{na}}((\log n)^{4})}{(n-i-1)^{3}}\]
and
\[\frac{\kappa_{j,l}^{(i,n)}\kappa_{m,p}^{(i,n)}U_{i+1}^{(q)}
U_{i+1}^{(r)}[45]}{(n-i-1)^2}=\frac{O_{P_{na}}((\log n)^{2})}{(n-i-1)^{2}}.\]

Finally, 

\[\frac{A_{2}}{72(n-i-1)}=-\frac{\delta_{2}^{(i,n)}}{n-i-1}+\frac{O_{P_{na}}((\log n)^{2})}{(n-i-1)^{2}}\]
with $\delta_{2}^{(i,n)}$ defined as
\begin{eqnarray} \label{def:f2}
\delta_{2}^{(i,n)}:=\frac{15}{72}\sum_{j=1}^{d}\sum_{m=1}^{d}\sum_{q=1}^{d}
\kappa_{(i,n)}^{j,j,m}\kappa_{(i,n)}^{m,q,q}
\end{eqnarray}

\subsubsection{Conclusion of the Edgeworth expansion}

The expansion of $\overline{\pi _{n-i-1}^{m_{i}}}$ writes under (\ref{cond:A1}) and (\ref{cond:A2}).:

\[\overline{\pi_{n-i-1}^{m_{i}}}(Z_{i+1})=\mathfrak{n}_{d}(Z_{i+1})
\left(1+\frac{1}{2(n-i-1)}^{t}U_{i+1}\gamma + \frac{\delta_{1}^{(i,n)}-\delta_{2}^{(i,n)}}{n-i-1}+\frac{O_{P_{na}}((\log n)^{3})}{(n-i-1)^{2}}\right)\]

\[+O_{P_{na}}(\frac{1}{(n-i-1)^{3/2}})\]

We also have the same kind of expansion for the denominator
\begin{eqnarray}
\overline{\pi_{n-i}^{m_{i}}}(0)=\mathfrak{n}_{d}(0)\left(1+\frac{\delta_{1}^{(i,n)}-\delta_{2}^{(i,n)}}{n-i}\right)+
O_{P_{na}}(\frac{1}{(n-i)^{3/2}})
\end{eqnarray}
Then
\begin{eqnarray}
\frac{\overline{\pi _{n-i-1}^{m_{i}}}(Z_{i+1})}{\overline{\pi _{n-i}^{m_{i}}}(0)}=\frac{\mathfrak{n}_{d}(Z_{i+1})}{\mathfrak{n}_{d}(0)}
\frac{1+\frac{1}{2(n-i-1)}{}^{t}U_{i+1}\gamma+\frac{\delta_{1}^{(i,n)}-\delta_{2}^{(i,n)}}{n-i-1}+\frac{O_{P_{na}}((\log n)^{3})}{(n-i-1)^{2}}}{1+\frac{\delta_{1}^{(i,n)}-\delta_{2}^{(i,n)}}{n-i}+O_{P_{na}}(\frac{1}{(n-i)^{3/2}})}
\end{eqnarray}

As mentioned before, we now have to substitute $U_{i+1}$ by $Y_{i+1}$, including in $\gamma$, $\delta_{1}^{(i,n)}$ and $\delta_{2}^{(i,n)}$ which depend implicitly of $U_{i+1}.$ For the last two $\delta_{1}^{(i,n)}$ and $\delta_{2}^{(i,n)}$, there will not be a discussion about the substitution since this terms are not dominant.

Using formula (\ref{def:U}) and (\ref{def:gamma}),

\[^{t}U_{i+1}\gamma=^{t}(\kappa_{(i,n)}^{-1/2}(Y_{i+1}-m_{i,n}))\gamma\]

We state two classical lemmas which can be found in \cite{Hefferon2010}, for example.

\begin{lemma}\label{def:matracine}
An $d\times d$ real positive-definite matrix have $2^{n}$ square root, all symmetric real with only one positive-definite.
\end{lemma}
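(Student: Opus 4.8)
The plan is to exploit the spectral theorem. Since the matrix $A$ in question is real symmetric and positive-definite, I would first write $A = Q\Lambda Q^{t}$, where $Q$ is orthogonal and $\Lambda = \mathrm{diag}(\lambda_{1},\dots,\lambda_{d})$ with each $\lambda_{j}>0$. The crucial observation is that any symmetric square root $B$ (that is, $B = {}^{t}B$ with $B^{2}=A$) necessarily commutes with $A$, because every matrix commutes with its own square: $BA = B\,B^{2} = B^{2}\,B = AB$. This reduces the study of symmetric square roots of $A$ to that of matrices lying in the commutant of $A$.

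Next I would use this commutation to pass to diagonal form. Assuming the eigenvalues $\lambda_{j}$ are distinct, a symmetric matrix commuting with $A$ must preserve each one-dimensional eigenspace of $A$, hence be diagonalized by the same $Q$; thus $B = Q D Q^{t}$ for some real diagonal $D$. The relation $B^{2}=A$ then forces $D^{2}=\Lambda$, i.e. $D_{jj} = \pm\sqrt{\lambda_{j}}$ for each $j$, and conversely each of these sign assignments produces a genuine symmetric real square root. Counting the independent sign choices over the $d$ coordinates yields exactly $2^{d}$ symmetric real square roots, which is the asserted count.

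Finally I would single out the positive-definite root: a matrix $B = Q D Q^{t}$ is positive-definite precisely when all its eigenvalues $D_{jj}$ are strictly positive, which occurs only for the unique selection $D_{jj} = +\sqrt{\lambda_{j}}$ for every $j$. Hence exactly one of the $2^{d}$ square roots is positive-definite. The main obstacle is the hidden hypothesis of simple spectrum: when $A$ has a repeated eigenvalue its commutant is strictly larger and there is in fact a continuum of symmetric square roots, so the count $2^{d}$ must be understood under the genericity assumption that the eigenvalues are distinct. The uniqueness of the positive-definite root, by contrast, holds unconditionally and follows from the functional-calculus construction $A^{1/2}=Q\Lambda^{1/2}Q^{t}$ together with the standard fact that a positive-definite matrix has a unique positive-definite square root.
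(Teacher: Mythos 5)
The paper does not actually prove this lemma: it is quoted as a classical fact with a bare pointer to Hefferon's linear algebra text, so your argument is not paralleling a proof in the paper but supplying one where the paper has none. Your proof is correct, and the route is the natural one: the spectral decomposition $A=Q\Lambda Q^{t}$, the observation that any square root commutes with $A$ (from $BA=B\cdot B^{2}=B^{2}\cdot B=AB$), the reduction --- when the spectrum is simple --- to a diagonal $D$ with $D^{2}=\Lambda$, the count $2^{d}$ from the independent sign choices, and the unconditional uniqueness of the positive-definite root via $\Lambda^{1/2}$. The caveat you raise is genuine and worth stating plainly: as printed, the lemma (whose ``$2^{n}$'' should in any case read ``$2^{d}$'') is false without the assumption of distinct eigenvalues, since for instance $I_{2}$ admits a continuum of real symmetric square roots (every reflection $\begin{pmatrix}\cos\theta & \sin\theta\\ \sin\theta & -\cos\theta\end{pmatrix}$ squares to $I_{2}$), so the $2^{d}$ count is generic rather than universal. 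Fortunately, the only part of the lemma the paper actually uses --- in selecting $\kappa_{(i,n)}^{-1/2}$ as \emph{the} symmetric positive-definite square root of a covariance matrix --- is the existence and uniqueness of the positive-definite root, which your final functional-calculus step establishes without any hypothesis on the spectrum. So your proof both covers what the paper needs and corrects the statement where it overreaches.
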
 

\begin{lemma} \label{def:matinv}
All definite-positive matrix are invertible and their inverse are also invertible. If this matrix is symmetric, its inverse is also invertible.
\end{lemma}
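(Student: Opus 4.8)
The plan is to argue directly from the defining quadratic-form inequality rather than through eigenvalues, since everything reduces to elementary manipulations of $x^{t}Ax$. Let $A$ denote a $d\times d$ positive-definite matrix, so that $x^{t}Ax>0$ for every nonzero $x\in\mathbb{R}^{d}$.

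First I would establish invertibility. Suppose toward a contradiction that $A$ is singular; then there exists $x\neq\underline{0}$ with $Ax=\underline{0}$, whence $x^{t}Ax=0$, contradicting positive-definiteness. Thus $\ker A=\{\underline{0}\}$ and $A$ is invertible. The inverse of any invertible matrix is itself invertible, its own inverse being $A$, so the second assertion is immediate.

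For the symmetric case, suppose in addition $A^{t}=A$. Then $(A^{-1})^{t}=(A^{t})^{-1}=A^{-1}$, so $A^{-1}$ is again symmetric. To see that $A^{-1}$ remains positive-definite, fix $x\neq\underline{0}$ and set $y:=A^{-1}x$, which is nonzero by injectivity of $A^{-1}$. Using $A^{t}=A$, I compute $x^{t}A^{-1}x=(Ay)^{t}A^{-1}(Ay)=y^{t}A^{t}A^{-1}Ay=y^{t}Ay>0$. Hence $A^{-1}$ inherits positive-definiteness and is in particular invertible by the first part, which delivers the stated conclusion.

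The argument presents no genuine obstacle; the only point requiring a moment's care is the preservation of positive-definiteness under inversion, and this is precisely what one needs downstream. Indeed, it is exactly this fact, combined with Lemma \ref{def:matracine} on the existence of a symmetric positive-definite square root, that licenses the repeated use of $\kappa_{(i,n)}^{-1/2}$ and of the powers $\kappa_{(i,n)}^{-1}$, $\kappa_{(i,n)}^{-2}$ of the covariance matrices throughout the Edgeworth expansion in the proof of Theorem \ref{th:princ}.
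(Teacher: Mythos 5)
Your proof is correct, but there is nothing in the paper to compare it against: the paper does not prove this lemma at all, stating it as a classical fact with a citation to the linear-algebra text of Hefferon \cite{Hefferon2010}. Your self-contained quadratic-form argument — singularity would give a nonzero $x$ with $x^{t}Ax=0$, hence invertibility; $(A^{-1})^{t}=(A^{t})^{-1}=A^{-1}$ for symmetry; and the substitution $y:=A^{-1}x$ yielding $x^{t}A^{-1}x=y^{t}Ay>0$ — is elementary and complete, so it fills a gap the paper leaves to an external reference. It is also worth noting that what you prove is the statement the paper actually \emph{uses} rather than the statement as literally written: the lemma's second sentence (``its inverse is also invertible'') is vacuous as phrased, since the inverse of any invertible matrix is trivially invertible, but in the proof of Theorem \ref{th:princ} the lemma is invoked to conclude that $\kappa_{(i,n)}^{-1}$ is symmetric positive-definite. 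That conclusion is exactly the content of your argument (symmetry of the inverse together with preservation of positive-definiteness), and it is what combines with Lemma \ref{def:matracine} to license the symmetric positive-definite square root $\kappa_{(i,n)}^{-1/2}$ and the powers $\kappa_{(i,n)}^{-1}$, $\kappa_{(i,n)}^{-2}$ throughout the Edgeworth expansion. So your proposal both supplies the missing proof and silently corrects the statement to the form needed downstream.
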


The matrix $\kappa_{(i,n)}$ is an $d\times d$ real positive-definite matrix as covariance matrix. So, using Lemma \ref{def:matinv}, $\kappa_{(i,n)}^{-1}$ is symmetric positive-defined and, using Lemma \ref{def:matracine}, $\kappa_{(i,n)}^{-1/2}$ is also symmetric. We choose the only positive-defined. Then $^{t}U_{i+1}\gamma$ rewrites

\[^{t}U_{i+1}\gamma=^{t}(Y_{i+1}-m_{i})\kappa_{(i,n)}^{-1/2}\gamma\]

with $\gamma=\kappa_{(i,n)}^{-3/2}\gamma_{Y}.$

Then
\[\frac{^{t}U_{i+1}\gamma}{2(n-i-1)}=\frac{^{t}Y_{i+1}\kappa_{(i,n)}^{-2}\gamma_{Y}}
{2(n-i-1)}-\frac{^{t}m_{i}\kappa_{(i,n)}^{-2}\gamma_{Y}}{2(n-i-1)}\]

Using Lemma \ref{lem:min} and Lemma \ref{lem:max}, it holds
\begin{eqnarray}
\frac{^{t}U_{i+1}\gamma}{2(n-i-1)}=\frac{^{t}Y_{i+1}\kappa_{(i,n)}^{-2}\gamma_{Y}}
{2(n-i-1)}-\frac{^{t}a\kappa_{(i,n)}^{-2}\gamma_{Y}}{2(n-i-1)}+
\frac{o_{P_{na}}(\epsilon_{n})}{n-i-1}.
\end{eqnarray}

Finally, 

\begin{eqnarray} \label{def:some}
\frac{\overline{\pi _{n-i-1}^{m_{i,n}}}(Z_{i+1})}{\overline{\pi _{n-i}^{m_{i,n}}}(0)}=\frac{\mathfrak{n}_{d}(Z_{i+1})}{\mathfrak{n}_{d}(0)}
\end{eqnarray}
\begin{eqnarray*}
\frac{1+\frac{{}^{t}Y_{i+1}\kappa_{(i,n)}^{-2}\gamma_{Y}}{2(n-i-1)}-
\frac{^{t}a_{n}\kappa_{(i,n)}^{-2}\gamma_{Y}}{2(n-i-1)}+
\frac{o_{P_{na}}(\epsilon_{n})}{n-i-1}+
\frac{\delta_{1}^{(i,n)}-\delta_{2}^{(i,n)}}{n-i-1}+\frac{O_{P_{na}}((\log n)^{3})}{(n-i-1)^{2}}}
{1+\frac{\delta_{1}^{(i,n)}-\delta_{2}^{(i,n)}}{n-i}+O_{P_{na}}(\frac{1}{(n-i)^{3/2}})}
\end{eqnarray*}

Denote $C:=\frac{\mathfrak{n}_{d}(Z_{i+1})}{\mathfrak{n}_{d}(0)}$

\subsubsection{Taylor expansion of $C$}

Recall that $Z_{i+1}=-\frac{U_{i+1}}{\sqrt{n-i-1}}$ and $U_{i+1}=\kappa_{(i,n)}^{-1/2}(Y_{i+1}-m_{i})$. 

Using Lemma \ref{lem:min} and Lemma \ref{lem:max} and under (\ref{cond:A1}) and (\ref{cond:A2}), it holds

\begin{eqnarray} \label{TaylorFinal}
\mathfrak{n}_{d}(Z_{i+1})=\mathfrak{n}_{d}(-\frac{\kappa_{(i,n)}^{-1/2}Y_{i+1}}{\sqrt{n-i-1}})(1+\frac{{}^tY_{i+1}\kappa_{(i,n)}^{-1}a}{n-i-1}
-\frac{{}^ta\kappa_{(i,n)}^{-1}a}{2(n-i-1)}+\frac{o_{P_{na}}(\epsilon_{n}(\log n))}{n-i-1})
\end{eqnarray}

\subsubsection{Final result}

To simplify the notations, denote $\gamma=\gamma_{Y}$ and
\begin{eqnarray}\label{def:f_total}
\delta^{(i,n)}=\delta_{1}^{(i,n)}-\delta_{2}^{(i,n)}.
\end{eqnarray}

We put together the two main results (\ref{def:some}) and (\ref{TaylorFinal}). Then under (\ref{cond:A1}) and (\ref{cond:A2}), we get

\[\frac{\overline{\pi _{n-i-1}^{m_{i}}}(Z_{i+1})}{\overline{\pi _{n-i}^{m_{i}}}(0)}=\exp\{-\frac{{}^tY_{i+1}
\kappa_{(i,n)}^{-1}Y_{i+1}}{2(n-i-1)}\}\]

\[\frac{1+\frac{{}^{t}Y_{i+1}\kappa_{(i,n)}^{-2}\gamma}{2(n-i-1)}-
\frac{^{t}a\kappa_{(i,n)}^{-2}\gamma}{2(n-i-1)}+\frac{\delta^{(i,n)}}{n-i-1}+\frac{{}^tY_{i+1}\kappa_{(i,n)}^{-1}a}{n-i-1}
-\frac{{}^ta\kappa_{(i,n)}^{-1}a}{2(n-i-1)}+\frac{o_{P_{na}}(\epsilon_{n}(\log n))}{n-i-1}}
{1+\frac{\delta^{(i,n)}}{n-i}+O_{P_{na}}(\frac{1}{(n-i)^{3/2}})}\]

Denote

\begin{eqnarray}\label{def:u1}
u_{1}=\frac{{}^{t}Y_{i+1}\kappa_{(i,n)}^{-2}\gamma}{2(n-i-1)}-
\frac{^{t}a\kappa_{(i,n)}^{-2}\gamma}{2(n-i-1)}+\frac{\delta^{(i,n)}}{n-i-1}+\frac{{}^tY_{i+1}\kappa_{(i,n)}^{-1}a}{n-i-1}
-\frac{{}^ta\kappa_{(i,n)}^{-1}a}{2(n-i-1)}+\frac{o_{P_{na}}(\epsilon_{n}(\log n))}{n-i-1}
\end{eqnarray}
and
\begin{eqnarray}\label{def:u2}
u_{2}=\frac{\delta^{(i,n)}}{n-i}+O_{P_{na}}(\frac{1}{(n-i)^{3/2}}).
\end{eqnarray}

Making a second order expansion of the numerator and the denominator, it then holds

\[p(\textbf{X}_{i+1}=Y_{i+1}|\textbf{S}_{i+1,n}=na-S_{1,i})=L_{i}C_{i}\]

\[\exp\{{}^tY_{i+1}(t_{i}+\frac{\kappa_{(i,n)}^{-2}\gamma}{2(n-i-1)})\}
\exp\{-\frac{{}^t(Y_{i+1}-a)\kappa_{(i,n)}^{-1}(Y_{i+1}-a)}{2(n-i-1)}\}
\exp\{o_{P_{na}}(\frac{\epsilon_{n}(\log n)}{n-i-1})\}A(i)\]

with
\[A(i):=\frac{\exp\{\frac{u_{1}^{2}}{2}+o(u_{1}^{2})\}}{\exp\{O_{P_{na}}(\frac{1}{(n-i-1)^{2}})+\frac{u_{2}^{2}}{2}+o(u_{2}^{2})\}}
\exp\{\frac{\delta^{(i,n)}}{n-i-1}-\frac{\delta^{(i,n)}}{n-i}\}\]

and
\[
L_{i}:=\frac{\sqrt{n-i}}{\sqrt{n-i-1}}\frac{C_{i}^{-1}}{\Phi(t_{i,n})}\exp\{{}^ta\frac{\kappa_{(i,n)}^{-2}\gamma}{2(n-i-1)}\}
\]

Then, we obtain

\begin{eqnarray}
p(\textbf{X}_{1}^{k}=Y_{1}^{k}|\mathbf{S}_{1,n}=na)
=g_{0}(Y_{1}|Y_{0})\prod_{i=1}^{k-1}g(Y_{i+1}|Y_{1}^{i})\prod_{i=0}^{k-1}A(i)\prod_{i=0}^{k-1}L(i)
\end{eqnarray}

It remains to prove
\begin{eqnarray} \label{Result_Li}
\prod_{i=0}^{k-1}L(i)=1+o_{P_{na}}(\epsilon_{n}(\log n)^{2})
\end{eqnarray}
and
\begin{eqnarray}\label{Result_Ai}
\prod_{i=0}^{k-1}A(i)=1+o_{P_{na}}(\epsilon_{n}(\log n)^{2}).
\end{eqnarray}

\subsubsection{Proof of (\ref{Result_Li})}
Recall the expression of  $C_{i}^{-1}$ :
\begin{eqnarray}
C_{i}^{-1}=\int \exp\{\frac{{}^tx(t_{i}+\kappa_{(i,n)}^{-2}\gamma)}{2(n-i-1)}\}\exp\{-\frac{{}^t(x-a)\kappa_{(i,n)}^{-1}(x-a)}{2(n-i-1)}\}p(x) dx
\end{eqnarray}

Denote
\begin{equation*}
u_{x}:=\frac{{}^tx\kappa_{(i,n)}^{-2}\gamma}{2\left(n-i-1\right) }+\frac{{}^t(x-a)\kappa_{(i,n)}^{-1}(x-a)}{2\left(n-i-1\right)}.
\end{equation*}

Use the classical bounds for $u_{x}$ :
\begin{equation*}
1-u+\frac{u_{x}^{2}}{2}-\frac{u_{x}^{3}}{6}\leq e^{-u_{x}}\leq 1-u_{x}+\frac{u_{x}^{2}}{2}
\end{equation*}
to obtain on both sides of the above inequalities the second order approximation of $C_{i}^{-1}$ through integration with respect to $p.$ The upper bound yields

\begin{align*}
C_{i}^{-1} =\int_{\mathbb{R}^{d}} \exp\{<x,t_{i}>\}\exp\{-u_{x}\}p(x)dx \\
\leq \Phi(t_{i})(1+\int_{\mathbb{R}^{d}}\frac{{}^tx\kappa_{(i,n)}^{-2}\gamma}{2(n-i-1)} \frac{\exp\{<x,t_{i}>\}p(x)}{\Phi(t_{i})}dx\\
-\int_{\mathbb{R}^{d}}\frac{{}^t(x-a)\kappa_{(i,n)}^{-1}(x-a)}{2(n-i-1)} \frac{\exp\{<x,t_{i}>\}p(x)}{\Phi(t_{i})} \\
+O_{P_{na}}\left( \frac{1}{(n-i-1)^{2}}\right))
\end{align*}

where the approximation term is uniform in the $Y_{1}^{k}.$

Using Lemme \ref{lem:min} and the definition of $m_{i}$ and $\kappa_{(i,n)}$, we finally get
\begin{eqnarray}
C_{i}^{-1} \leq{\Phi(t_{i})\left(1+\frac{{}^ta\kappa_{(i,n)}^{-2}\gamma}{2(n-i-1)}-\frac{1}{2(n-i-1)}+\frac{o_{P_{na}}\left(\epsilon_{n}\right)}{n-i-1}\right)} 
\end{eqnarray}

Then,

\begin{eqnarray}
L_{i} \leq{\frac{\sqrt{n-i}}{\sqrt{n-i-1}}\exp\{\frac{{}^ta\kappa_{(i,n)}^{-2}\gamma}{2(n-i-1)}\}
(1+\frac{{}^ta\kappa_{(i,n)}^{-2}\gamma}{2(n-i-1)}-\frac{1}{2(n-i-1)}+\frac{o_{P_{na}}\left(\epsilon_{n}\right)}{n-i-1})}
\end{eqnarray}

Substituting $\frac{\sqrt{n-i}}{\sqrt{n-i-1}}$ and $\exp\left( -\frac {{}^ta\kappa_{(i,n)}^{-2}\gamma}{2(n-i-1)}\right) $ by their expansion $1+\frac{1}{2\left(n-i-1\right) }+O\left( \frac{1}{\left( n-i-1\right) ^{2}}\right) $ and $1-\frac {{}^ta\kappa_{(i,n)}^{-2}\gamma}{2(n-i-1)}+O\left( \frac{||a||^{2}}{(n-i-1)^{2}}\right) $ in the upper bound of $L_{i}$; it then holds

\begin{eqnarray*}
L_{i} \leq\ \left( 1+\frac{1}{2(n-i-1)}+O\left( \frac{1}{\left( n-i-1\right) ^{2}}\right)\right)
\left(1-\frac {{}^ta\kappa_{(i,n)}^{-2}\gamma}{2(n-i-1)}+O\left( \frac{||a||^{2}}{(n-i-1)^{2}}\right)\right) \\
(1+\frac{{}^ta\kappa_{(i,n)}^{-2}\gamma}{2(n-i-1)}-\frac{1}{2(n-i-1)}+\frac{o_{P_{na}}\left(\epsilon_{n}\right)}{n-i-1}) 
\end{eqnarray*}
Writes
\begin{equation*}
\prod_{i=1}^{k}L_{i}\leq{\prod_{i=1}^{k}}\left( {1+M_{i}}\right)
\end{equation*}
with
\begin{equation*}
M_{i}=-\frac{({}^ta\kappa_{(i,n)}^{-2}\gamma)^{2}}{4(n-i-1)^{2}}+\frac{o_{P_{na}}(\epsilon_{n})}{n-i-1}.
\end{equation*}

Under (\ref{cond:A1}) and (\ref{cond:A2}), $\sum_{i=0}^{k-1}M_{i}$ is $o_{P_{na}}\left(\epsilon_{n}\left( \log n\right) ^{2}\right).$

\subsubsection{Proof of (\ref{Result_Ai})}

We make use of the following version of the law of large numbers for
triangular arrays (see \cite{Taylor1985} Theorem 3.1.3).

\begin{theorem} \label{Theo:Taylor}Let $X_{i,n}$ ,$1\leq i\leq k$ denote an array of row-wise real exchangeable r.v.'s and $\lim_{n\rightarrow\infty}k=\infty.$ Let $\rho_{n}:=EX_{1,n}X_{2,n}.$ Assume that for some finite $\Gamma$ , $E[X_{1,n}^{2}]\leq\Gamma.$ If for some doubly indexed sequence $\left(a_{i,n}\right) $ such that $\lim_{n\rightarrow\infty}%
\sum_{i=1}^{k}a_{i,n}^{2}=0$ it holds that
\begin{equation*}
\lim_{n\rightarrow\infty}\rho_{n}\left( \sum_{i=1}^{k}a_{i,n}^{2}\right)
^{2}=0
\end{equation*}
then%
\begin{equation*}
\lim_{n\rightarrow\infty}\sum_{i=1}^{k}a_{i,n}X_{i,n}=0
\end{equation*}
in probability.
\end{theorem}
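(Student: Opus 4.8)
The plan is to use the second–moment method (a Chebyshev / $L^{2}$ argument), since the only quantitative information we are given is the uniform bound $E[X_{1,n}^{2}]\le\Gamma$ and the common pairwise correlation $\rho_{n}$. Set $S_{n}:=\sum_{i=1}^{k}a_{i,n}X_{i,n}$; the aim is to prove $E[S_{n}^{2}]\to 0$, which yields convergence of $S_{n}$ to $0$ in $L^{2}$, hence in probability. Note that this route needs no centering hypothesis, because it controls the raw second moment directly.

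First I would record the consequences of row-wise exchangeability: for each fixed $n$ the variables $X_{1,n},\dots,X_{k,n}$ are identically distributed, so $E[X_{i,n}^{2}]=E[X_{1,n}^{2}]\le\Gamma$ for every $i$, and $E[X_{i,n}X_{j,n}]=\rho_{n}$ for every $i\neq j$. By Cauchy--Schwarz, $|\rho_{n}|\le E[X_{1,n}^{2}]\le\Gamma$, so $(\rho_{n})$ is a bounded sequence; this boundedness is what will let the diagonal contributions be absorbed.

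Next I would expand the second moment and separate diagonal from off-diagonal terms, using $\sum_{i\neq j}a_{i,n}a_{j,n}=(\sum_{i}a_{i,n})^{2}-\sum_{i}a_{i,n}^{2}$:
\begin{equation*}
E[S_{n}^{2}]=E[X_{1,n}^{2}]\sum_{i=1}^{k}a_{i,n}^{2}
+\rho_{n}\Big[\Big(\sum_{i=1}^{k}a_{i,n}\Big)^{2}-\sum_{i=1}^{k}a_{i,n}^{2}\Big].
\end{equation*}
The diagonal part $E[X_{1,n}^{2}]\sum a_{i,n}^{2}$ is at most $\Gamma\sum a_{i,n}^{2}\to 0$, and similarly $\rho_{n}\sum a_{i,n}^{2}\to 0$ since $\rho_{n}$ is bounded and $\sum a_{i,n}^{2}\to 0$. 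The one genuinely delicate contribution is the cross term $\rho_{n}(\sum_{i}a_{i,n})^{2}$: for an independent centered array this term is simply absent, and it is precisely the exchangeable dependence that forces us to control it. This is exactly where the correlation hypothesis is invoked, to conclude $\rho_{n}(\sum_{i}a_{i,n})^{2}\to 0$ and hence $E[S_{n}^{2}]\to 0$. Chebyshev's inequality then gives $P(|S_{n}|>\varepsilon)\le E[S_{n}^{2}]/\varepsilon^{2}\to 0$ for every $\varepsilon>0$, which proves the claim.

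The main obstacle is thus the off-diagonal correlation term: because the array is only exchangeable and not independent, the pairwise correlations do not vanish, and the entire content of the correlation assumption is to make the weighted mass $\rho_{n}(\sum_{i}a_{i,n})^{2}$ negligible. The one point I would check carefully against the precise statement is that the quantity genuinely appearing in $E[S_{n}^{2}]$ is $\rho_{n}(\sum_{i}a_{i,n})^{2}$, whereas the hypothesis is written with $\rho_{n}(\sum_{i}a_{i,n}^{2})^{2}$; reconciling these two is merely a matter of identifying which $\ell^{p}$ norm of $(a_{i,n})$ is intended, and I would verify that the stated condition indeed controls the cross term in the regime under consideration.
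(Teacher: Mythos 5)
Your second-moment argument is correct in substance, but note first that the paper contains no proof of this statement to compare against: Theorem \ref{Theo:Taylor} is quoted as a black box from Theorem 3.1.3 of \cite{Taylor1985}. Your Chebyshev/$L^{2}$ proof is therefore a self-contained, elementary alternative, and it is the natural one: row-wise exchangeability gives $E[X_{i,n}^{2}]=E[X_{1,n}^{2}]\le\Gamma$ and $E[X_{i,n}X_{j,n}]=\rho_{n}$ for $i\neq j$, hence $E[S_{n}^{2}]=E[X_{1,n}^{2}]\sum_{i}a_{i,n}^{2}+\rho_{n}\bigl[(\sum_{i}a_{i,n})^{2}-\sum_{i}a_{i,n}^{2}\bigr]$, and Cauchy--Schwarz gives $|\rho_{n}|\le\Gamma$, so both terms weighted by $\sum_{i}a_{i,n}^{2}$ vanish, leaving only the cross term $\rho_{n}(\sum_{i}a_{i,n})^{2}$; no centering is needed since the raw second moment is controlled directly.

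The issue you flagged at the end should be resolved rather than left as a check, and resolving it vindicates you: the hypothesis as printed, $\rho_{n}(\sum_{i}a_{i,n}^{2})^{2}\to 0$, cannot be the intended one. As printed it is redundant (it already follows from $|\rho_{n}|\le\Gamma$ and $\sum_{i}a_{i,n}^{2}\to 0$), and the theorem would then be false: take $X_{i,n}\equiv 1$ and $a_{i,n}=1/k$, so that $\sum_{i}a_{i,n}^{2}=1/k\to 0$ and $\rho_{n}(\sum_{i}a_{i,n}^{2})^{2}=1/k^{2}\to 0$, yet $\sum_{i}a_{i,n}X_{i,n}=1$ for all $n$. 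The correct hypothesis is $\rho_{n}(\sum_{i}a_{i,n})^{2}\to 0$, exactly the quantity your expansion produces, and this is also what the paper actually verifies when it applies the theorem: in the appendix the checks of the third hypothesis bound $\rho_{n}\left(\sum_{i=0}^{k-1}a_{i,n}\right)^{2}$, with the $a_{i,n}$ unsquared inside the sum. So your proof is complete and correct for the corrected statement; the only improvement would be to assert affirmatively, with the counterexample above, that the printed condition is a misprint for $\rho_{n}(\sum_{i}a_{i,n})^{2}\to 0$, instead of leaving the reconciliation as something to verify.
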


We want to prove $\prod_{i=0}^{k-1}A(i)=1+o_{P_{na}}(\epsilon_{n}(\log n)^{2})$
where
\[A(i):=\frac{\exp\{\frac{u_{1}^{2}}{2}+o(u_{1}^{2})\}}{\exp\{O_{P_{na}}(\frac{1}{(n-i-1)^{2}})+\frac{u_{2}^{2}}{2}+o(u_{2}^{2})\}}
\exp\{\frac{\delta^{(i,n)}}{n-i-1}-\frac{\delta^{(i,n)}}{n-i}\}\]
with $u_{1}$ and $u_{2}$ defined in (\ref{def:u1}) and in (\ref{def:u2}).

Let $\beta_{j}$ a positive number for $j \in \{1,...,17\}$ and denote

\begin{equation*}
A_{n}^{1}:=\left\{ \frac{1}{\epsilon_{n}\left( \log n\right) ^{2}}\sum_{i=0}^{k-1}\left\vert \frac{\delta^{(i,n)}}{(n-i-1)(n-i)}\right\vert <\beta_{1}\right\}
\end{equation*}

\begin{equation*}
A_{n}^{2}:=\left\{ \frac{1}{\epsilon_{n}\left( \log n\right) ^{2}}\sum_{i=0}^{k-1}\left\vert \frac{(\delta^{(i,n)})^{2}}{(n-i)^{2}}\right\vert <\beta_{2}\right\}
\end{equation*}

\begin{equation*}
A_{n}^{3}:=\left\{ \frac{1}{\epsilon_{n}\left( \log n\right) ^{2}}\sum_{i=0}^{k-1}\left\vert \frac{(\delta^{(i,n)})^{2}}{(n-i-1)^{2}}\right\vert <\beta_{3}\right\}
\end{equation*}

\begin{equation*}
A_{n}^{4}:=\left\{ \frac{1}{\epsilon_{n}\left( \log n\right) ^{2}}\sum_{i=0}^{k-1}\left\vert \frac{(^{t}a\kappa_{(i,n)}^{-2}\gamma)^{2}}{(n-i-1)^{2}}\right\vert <\beta_{4}\right\}
\end{equation*}

\begin{equation*}
A_{n}^{5}:=\left\{ \frac{1}{\epsilon_{n}\left( \log n\right) ^{2}}\sum_{i=0}^{k-1}\left\vert \frac{({}^ta\kappa_{(i,n)}^{-1}a)^{2}}{(n-i-1)^{2}}\right\vert <\beta_{5}\right\}
\end{equation*}

\begin{equation*}
A_{n}^{6}:=\left\{ \frac{1}{\epsilon_{n}\left( \log n\right) ^{2}}\sum_{i=0}^{k-1}\left\vert \frac{\delta^{(i,n)}(^{t}a\kappa_{(i,n)}^{-2}\gamma)}{(n-i-1)^{2}}\right\vert <\beta_{6}\right\}
\end{equation*}

\begin{equation*}
A_{n}^{7}:=\left\{ \frac{1}{\epsilon_{n}\left( \log n\right) ^{2}}\sum_{i=0}^{k-1}\left\vert \frac{\delta^{(i,n)}({}^ta\kappa_{(i,n)}^{-1}a)}{(n-i-1)^{2}}\right\vert <\beta_{7}\right\}
\end{equation*}

\begin{equation*}
A_{n}^{8}:=\left\{ \frac{1}{\epsilon_{n}\left( \log n\right) ^{2}}\sum_{i=0}^{k-1}\left\vert \frac{(^{t}a\kappa_{(i,n)}^{-2}\gamma)({}^ta\kappa_{(i,n)}^{-1}a)}{(n-i-1)^{2}}\right\vert <\beta_{8}\right\}
\end{equation*}

\begin{equation*}
A_{n}^{9}:=\left\{ \frac{1}{\epsilon_{n}\left( \log n\right) ^{2}}\sum_{i=0}^{k-1}\left\vert \frac{({}^{t}Y_{i+1}\kappa_{(i,n)}^{-2}\gamma)^{2}}{(n-i-1)^{2}}\right\vert <\beta_{9}\right\}
\end{equation*}

\begin{equation*}
A_{n}^{10}:=\left\{ \frac{1}{\epsilon_{n}\left( \log n\right) ^{2}}\sum_{i=0}^{k-1}\left\vert \frac{({}^tY_{i+1}\kappa_{(i,n)}^{-1}a)^{2}}{(n-i-1)^{2}}\right\vert <\beta_{10}\right\}
\end{equation*}

\begin{equation*}
A_{n}^{11}:=\left\{ \frac{1}{\epsilon_{n}\left( \log n\right) ^{2}}\sum_{i=0}^{k-1}\left\vert \frac{({}^{t}Y_{i+1}\kappa_{(i,n)}^{-2}\gamma)(^{t}a\kappa_{(i,n)}^{-2}\gamma)}{(n-i-1)^{2}}\right\vert <\beta_{11}\right\}
\end{equation*}

\begin{equation*}
A_{n}^{12}:=\left\{ \frac{1}{\epsilon_{n}\left( \log n\right) ^{2}}\sum_{i=0}^{k-1}\left\vert \frac{\delta^{(i,n)}({}^{t}Y_{i+1}\kappa_{(i,n)}^{-2}\gamma)}{(n-i-1)^{2}}\right\vert <\beta_{12}\right\}
\end{equation*}

\begin{equation*}
A_{n}^{13}:=\left\{ \frac{1}{\epsilon_{n}\left( \log n\right) ^{2}}\sum_{i=0}^{k-1}\left\vert \frac{({}^{t}Y_{i+1}\kappa_{(i,n)}^{-2}\gamma)({}^ta\kappa_{(i,n)}^{-1}a)}{(n-i-1)^{2}}\right\vert <\beta_{13}\right\}
\end{equation*}

\begin{equation*}
A_{n}^{14}:=\left\{ \frac{1}{\epsilon_{n}\left( \log n\right) ^{2}}\sum_{i=0}^{k-1}\left\vert \frac{({}^tY_{i+1}\kappa_{(i,n)}^{-1}a)(^{t}a\kappa_{(i,n)}^{-2}\gamma)}{(n-i-1)^{2}}\right\vert <\beta_{14}\right\}
\end{equation*}

\begin{equation*}
A_{n}^{15}:=\left\{ \frac{1}{\epsilon_{n}\left( \log n\right) ^{2}}\sum_{i=0}^{k-1}\left\vert \frac{\delta^{(i,n)}({}^tY_{i+1}\kappa_{(i,n)}^{-1}a)}{(n-i-1)^{2}}\right\vert <\beta_{15}\right\}
\end{equation*}

\begin{equation*}
A_{n}^{16}:=\left\{ \frac{1}{\epsilon_{n}\left( \log n\right) ^{2}}\sum_{i=0}^{k-1}\left\vert \frac{({}^tY_{i+1}\kappa_{(i,n)}^{-1}a)({}^ta\kappa_{(i,n)}^{-1}a)}{(n-i-1)^{2}}\right\vert <\beta_{16}\right\}
\end{equation*}

\begin{equation*}
A_{n}^{17}:=\left\{ \frac{1}{\epsilon_{n}\left( \log n\right) ^{2}}\sum_{i=0}^{k-1}\left\vert \frac{({}^tY_{i+1}\kappa_{(i,n)}^{-1}a)({}^{t}Y_{i+1}\kappa_{(i,n)}^{-2}\gamma)}{(n-i-1)^{2}}\right\vert <\beta_{17}\right\}
\end{equation*}

We can split this seventeen sets into three groups. For each of this group, we will study an example. The remaining sets can be controlled using the same technics.
For $j \in \{1,...,8\}$, we easily have
\begin{equation*}
\lim_{n\rightarrow\infty}P_{na}\left( A_{n}^{j}\right) =1
\end{equation*}

The second group of sets (from $A_{n}^{9}$ to $A_{n}^{16})$ depends of $Y_{i+1}.$ Consider $A_{n}^{16}.$ For proving $\lim_{n\rightarrow\infty}P_{na}\left( A_{n}^{16}\right) =1$, Theorem \ref{Theo:Taylor} have to be used. The sums in $A_{n}^{16}$ can be rewrite as follows

\begin{align*}
|({}^tY_{i+1}\kappa_{(i,n)}^{-1}a)(^{t}a\kappa_{(i,n)}^{-1}a)|=|<Y_{i+1},\kappa_{(i,n)}^{-1}a><,\kappa_{(i,n)}^{-1}a>| \\
\leq{||Y_{i+1}||||\kappa_{(i,n)}^{-1}a||||a||||\kappa_{(i,n)}^{-1}a||} \\
\leq{\lambda_{1}^{2}||Y_{i+1}||||a||^{3}} \\
\end{align*}
where $\lambda_{1}$ is the largest singular value of $\kappa_{(i,n)}^{-1}.$

We now can apply Theorem \ref{Theo:Taylor} with $X_{i,n}:=||Y_{i+1}||$ and $a_{i,n}:=\frac{||a||^{3}}{\epsilon_{n}\left( \log n\right) ^{2}(n-i-1)^{2}}.$

We check the hypothesis.
\paragraph{1.} Using Lemma \ref{lemme:moments}, it holds
\begin{eqnarray*}
E[X_{1,n}^{2}]=E[||Y_{1}||^{2}]=\sum_{j=1}^{d}E[[Y_{1}^{(j)}]^{2}]=tr(\kappa_{(i,n)})+||a||^{2}.
\end{eqnarray*}
Hence, $E[X_{1,n}^{2}]\leq{\Gamma}$ for some finite $\Gamma.$

\paragraph{2.} It holds
\begin{eqnarray}
\rho_{n}:=E[X_{1,n}X_{2,n}]\leq{\sqrt{E[X_{1,n}^{2}]E[X_{1,n}^{2}]}}=\sqrt{E[||Y_{1}||^{2}]E[||Y_{2}||^{2}]} \notag
\end{eqnarray}
with
\begin{eqnarray}
E[||Y_{1}||^{2}||Y_{2}||^{2}]=E[(\sum_{j=1}^{d}[Y_{1}^{(j)}]^{2})(\sum_{j=1}^{d}[Y_{2}^{(j)}]^{2})]. \notag
\end{eqnarray}

Using Lemma \ref{lemme:moments},
\begin{eqnarray}
\rho_{n}\leq{tr(\kappa_{(i,n)})+||a||^{2}} \notag
\end{eqnarray}

\paragraph{3.} The last two hypothesis are easily checked under (\ref{cond:A1}).
\begin{eqnarray}
\lim_{n \to \infty} \sum_{i=0}^{k-1} a_{i,n}^{2}=\lim_{n \to \infty} \frac{||a||^{6}}{\epsilon_{n}^{2}(\log n)^{4}(n-k)^{3}}=0
\end{eqnarray}
and
\begin{eqnarray}
\lim_{n \to \infty} \rho_{n}\left(\sum_{i=0}^{k-1} a_{i,n}\right)^{2}\leq{\lim_{n \to \infty} \frac{\rho_{n}||a||^{6}}{\epsilon_{n}^{2}(\log n)^{4}(n-k)^{2}}}=0.
\end{eqnarray}

Then, by Theorem \ref{Theo:Taylor}, 
\[\lim_{n\rightarrow\infty}P_{na}\left( A_{n}^{16}\right)=1.\]

Finally consider $A_{n}^{17}$ with

\begin{eqnarray*}
|({}^tY_{i+1}\kappa_{(i,n)}^{-1}a)({}^{t}Y_{i+1}\kappa_{(i,n)}^{-2}\gamma)|\leq{\lambda_{1}^{3}||Y_{i+1}||^{2}||a||}
\end{eqnarray*}
where $\lambda_{1}$ is the largest singular value of $\kappa_{(i,n)}^{-1}.$

We now can apply Theorem \ref{Theo:Taylor} with $X_{i,n}:=||Y_{i+1}||^{2}$ and $a_{i,n}:=\frac{||a||}{\epsilon_{n}\left( \log n\right) ^{2}(n-i-1)^{2}}.$

\paragraph{1}
By Lemma \ref{lemme:moments}, we can find a positive constant $\Gamma$ such as $E[X_{1,n}^{2}]\leq{\Gamma}.$

\paragraph{2}
\begin{eqnarray}
\rho_{n}=E[X_{1,n}X_{2,n}]=E[||Y_{1}||^{2}||Y_{2}||^{2}]=\sum_{j=1}^{d} E[[Y_{1}^{(j)}]^{2}[Y_{2}^{(j)}]^{2}]+\sum_{j=1}^{d} \sum_{l \ne j} E[[Y_{1}^{(j)}]^{2}[Y_{2}^{(l)}]^{2}]
\end{eqnarray}
Using once again Lemma \ref{lemme:moments}, we can show
\begin{eqnarray*}
E[[Y_{1}^{(j)}]^{2}[Y_{2}^{(k)}]^{2}]\leq{\Gamma_{1}}
\end{eqnarray*}

\paragraph{3}. Under, (\ref{cond:A1}),
\begin{eqnarray}
\lim_{n \to \infty} \sum_{i=0}^{k-1} a_{i,n}^{2}=\lim_{n \to \infty} \frac{||a||^{2}}{\epsilon_{n}^{2}(\log n)^{4}(n-k)^{3}}=0
\end{eqnarray}
et
\begin{eqnarray}
\lim_{n \to \infty} \rho_{n}\left(\sum_{i=0}^{k-1} a_{i,n}\right)^{2}\leq{\lim_{n \to \infty} \frac{\rho_{n}||a||^{2}}{\epsilon_{n}^{2}(\log n)^{4}(n-k)^{2}}}=0
\end{eqnarray}

Then, by Theorem \ref{Theo:Taylor}, $\lim_{n\rightarrow\infty}P_{na}\left( A_{n}^{16}\right)=1.$

Denoting $A_{n}:=\bigcup_{j=1}^{17} A_{n}^{j}$, it holds
\[
\lim_{n\rightarrow\infty}P_{na}\left( A_{n}\right)=1.
\]

And $\prod_{i=0}^{k-1}A(i)=1+o_{P_{na}}(\epsilon_{n}(\log n)^{2}).$ This completes the proof.

\end{document}